\newcommand\version{March 27, 2009}
\newtheorem{theorem}{Theorem}[section]
\newtheorem{proposition}[theorem]{Proposition}
\newtheorem{lemma}[theorem]{Lemma}
\theoremstyle{definition}
\newtheorem{assumption}[theorem]{Assumption}
\theoremstyle{remark}
\newtheorem{remark}[theorem]{Remark}
\numberwithin{equation}{section}
\newcommand{\C}{\mathbb{C}}
\newcommand{\const}{\mathrm{const}\ }
\renewcommand{\epsilon}{\varepsilon}
\newcommand{\loc}{{\rm loc}}
\newcommand{\N}{\mathbb{N}}
\renewcommand{\phi}{\varphi}
\newcommand{\R}{\mathbb{R}}
\newcommand{\Z}{\mathbb{Z}}
\DeclareMathOperator{\curl}{curl}
\DeclareMathOperator{\rmd}{d}
\DeclareMathOperator{\dom}{dom}
\DeclareMathOperator{\rme}{e}
\DeclareMathOperator{\re}{Re}
\DeclareMathOperator{\spec}{spec}
\begin{document}

\title[Weakly coupled bound states of Pauli operators --- \version]{Weakly coupled bound states of Pauli operators}

\author{Rupert L. Frank}
\address{Rupert L. Frank, Department of Mathematics,
Princeton University, Washington Road, Princeton, NJ 08544, USA}
\email{rlfrank@math.princeton.edu}

\author{Sergey Morozov}
\address{Sergey Morozov, Department of Mathematics, University College London, Gower Street, London, WC1E 6BT, UK}
\email{morozov@math.ucl.ac.uk}

\author{Semjon Vugalter}
\address{Semjon Vugalter, Mathematisches Institut der Ludwig--Maximilians--Universit\"at, Theresienstr. 39, 80333 Munich, Germany}
\email{wugalter@math.lmu.de}

\thanks{\copyright\, 2009 by the authors. This paper may be reproduced, in its entirety, for non-commercial purposes.}

\begin{abstract}
We consider the two-dimensional Pauli operator perturbed by a weakly coupled, attractive potential. We show that besides the eigenvalues arising from the Aharonov-Casher zero modes there are two or one (depending on whether the flux of the magnetic field is integer or not) additional eigenvalues for arbitrarily small coupling and we calculate their asymptotics in the weak coupling limit.
\end{abstract}

\maketitle

\section{Introduction and main results}

\subsection{Introduction}

This paper is concerned with negative eigenvalues of perturbations of the two-dimensional Pauli operator
\begin{equation}\label{eq:pauli}
P := \big( {\bf \sigma}\cdot (-i\nabla +A) \big)^2
\qquad \text{in} \ L_2(\R^2,\C^2) \,.
\end{equation}
Here ${\bf \sigma} = (\sigma_1,\sigma_2)$ is the pair of the first two Pauli matrices
\begin{equation*}
  \sigma_1 =   \begin{pmatrix}
    0& 1\\
    1& 0
  \end{pmatrix}\,, \quad
  \sigma_2 =  \begin{pmatrix}
    0& -i\\
    i& 0
  \end{pmatrix}\,,
 \end{equation*}
and $A$ is a real vector potential corresponding to the magnetic field $B=\curl A$. Of course, $P$ is non-negative, and it is well-known that the point $0$ may be an eigenvalue of $P$. Indeed, the Aharonov-Casher theorem (see, e.g., \cite[Thm. 6.5]{CFKS}) asserts that if $B$ is, say, bounded with compact support, then the dimension of the kernel of $P$ is 
\begin{equation}\label{eq:n}
N:= \#\big\{m\in\N_0 : m<|\Phi|-1 \big\} \,,
\end{equation}
where
\begin{equation}\label{eq:flux}
\Phi := \frac1{2\pi} \int_{\R^2} B(x) \,\rmd x
\end{equation}
is the total flux of $B$. It is less known that $P$ also has a virtual level at $0$. Indeed, it was shown by Weidl \cite{W} that if $V$ is a non-negative, sufficiently regular and not identically zero function, then for all sufficiently small $\alpha>0$ the perturbed Pauli operator $P-\alpha V$ has exactly 
\begin{equation}
\label{eq:n'}
N':=
\begin{cases}
N + 1 & \text{if}\ \Phi\in \R\setminus\Z \,, \\
N + 2 & \text{if}\ \Phi\in \Z
\end{cases}
\end{equation}
negative eigenvalues. We express this fact by saying that $P$ has, in addition to its $N$ eigenvalues, one, resp. two virtual levels at zero.

These `additional' eigenvalues are of physical interest, in particular, since an anomalous magnetic moment $g>2$ corresponds to a perturbation $\alpha V = (g-2) B$ of the Pauli operator. Note that $g=2.0023$ for an electron. We refer to \cite{BCEZ} and references therein for more on this.

Let $\lambda_1(\alpha), \dots, \lambda_{N'}(\alpha)$ denote the $N'$ smallest eigenvalues of the operator $P-\alpha V$ in non-decreasing order. Throughout we assume that $B$ is compactly supported and radially symmetric. The goal of this paper is to obtain the asymptotic behavior of these eigenvalues as $\alpha\to 0+$. While it follows in a rather straightforward manner that
$$
\lambda_j(\alpha) \sim - c_j \alpha \,,
\qquad j=1,\ldots,N\,,
$$
as $\alpha\to 0+$, our main result is that
$$
\lambda_{N+1}(\alpha) \sim - c_{N+1} \alpha^{1/\mu}
\qquad
\text{if}\ \mu:= \big\{|\Phi|\big\} = |\Phi|-N \in(0,1)
$$
and
$$
\lambda_{N+1}(\alpha) \sim - \frac{c_{N+1} \alpha}{\big|\ln(c_{N+1} \alpha)\big|}\,,
\quad
\ln\big|\lambda_{N+2}(\alpha)\big| \sim -\frac 1 {c_{N+2}\alpha} 
\quad
\text{if}\  |\Phi|=N\in\N \,.
$$
(The result is slightly different in the case $\Phi=0$ and we refer to Theorem \ref{mainint} below for the precise statement.) Moreover, we obtain explicit expressions for the coefficients $c_j>0$ as well as estimates for the remainders in the above asymptotic expressions.

We note that our result quantifies a paramagnetic effect of the Pauli operator. Indeed, while the ground state energy of the Schr\"odinger operator $-\Delta -\alpha V$ is exponentially small in $\alpha$ in the weak coupling limit (see \cite{S1}), the addition of an arbitrarily small magnetic field with non-zero flux $\Phi$ leads to a much more negative ground state energy of the Pauli operator $P-\alpha V$ which is of the order $\alpha^{1/|\Phi|}$ if $|\Phi|<1$, $\alpha |\ln\alpha|^{-1}$ if $|\Phi|=1$ and $\alpha$ if $|\Phi|>1$.

\bigskip

The existence of weakly coupled eigenvalues can intuitively be understood as follows. Introducing the function
\begin{equation}\label{xi}
\xi(x):= -(2\pi)^{-1}\int_{\mathbb{R}^2}B(y)\ln|x- y| \,\rmd y \,,
\end{equation}
one easily finds the well-known relation
\begin{equation}
 \int_{\R^2} \big| \mathbf{\sigma}\cdot(-i\nabla+A)\psi\big|^2 \,\rmd x 
= 4 \int_{\R^2} \big( \rme^{-2\xi} |\partial_{\overline z} \rme^\xi \psi_+|^2 + \rme^{2\xi} |\partial_{z} \rme^{-\xi} \psi_-|^2 \big) \,\rmd x \,,
\end{equation}
where $\psi=(\psi^+,\psi^-)$, $\partial_z=\frac12(\partial_{x_1} -i \partial_{x_2})$ and $\partial_{\overline z}=\frac12(\partial_{x_1} +i \partial_{x_2})$. This suggests that zero modes (i.e., solutions of the equation $P\psi=0$) should be of the form $(\Omega_k^+,0)$ and $(0,\Omega_k^-)$, where
\begin{equation}\label{eq:omega}
\Omega_{k}^\pm(x) := (2\pi)^{-1/2} \rme^{\mp\xi(x)} (x_1 \pm ix_2)^k \,, \qquad k\in\N_0 \,.
\end{equation}
For the sake of definiteness let us assume that $\Phi>0$. If $B$ is radial and compactly supported, then $\xi(x) = - \Phi \ln|x|$ for large $|x|$ by Newton's theorem and hence all the $\Omega_{k}^+$ are increasing at infinity and do not belong to $L_2(\R^2)$. In contrast, $\Omega_k^-$ belongs to $L_2(\R^2)$ iff $0\leqslant k<\Phi-1$. Those $\Omega_k^-$ give rise to eigenvalues of $P-\alpha V$ which disappear \emph{linearly} in the weak coupling limit. What is more important for us is that the functions $\Omega_k^-$ with $\Phi-1\leqslant k\leqslant \Phi$ are \emph{bounded} (although they do not belong to $L_2(\R^2)$). It was already observed in \cite{BCEZ} that any perturbation by a negative potential $-V$ will turn these functions into $L_2$-eigenfunctions. Our key point is that \emph{the weak coupling asymptotics of the eigenvalues are determined by the spatial asymptotics of the functions $\Omega_k^-$}.

\bigskip

There is an enormous literature on weakly coupled eigenvalues and low energy behavior of Schr\"odinger operators from which we only mention \cite{S1,BGS,K,S2,KS,W}, the surveys \cite{B,P} and the recent papers \cite{AZ1,AZ2,Ko,HS}. We emphasize that techniques from weakly coupled Schr\"odinger operators have also turned out to be useful in a non-linear context \cite{FHNS}.

All papers on weak coupling asymptotics, which we are aware of, are based on the Birman-Schwinger principle and on operator-theoretic arguments. They rely upon very detailed knowledge of the unperturbed Green's function which is, of course, explicitly known for the Laplacian. It seems very unlikely that such information can be obtained in the generality in which we work here.

Instead we propose a completely different, purely variational approach. Its advantage is that it closely follows the above mentioned intuition that weak coupling asymptotics are determined by spatial asymptotics of resonance functions, which is often obscured in the operator-theoretic approach. It is by no means restricted to the problem under consideration and it allows one to recover in a simple way and improve upon the results in \cite{S1,BGS,K,KS,Ko} concerning the lowest eigenvalue. While the ground state energy is of primary physical interest, we should say that we do not see in general how to obtain results on excited states with our method. This is where in the present situation the radial symmetry of the problem comes in, which reduces the problem to \emph{ground state problems} for half-line operators. We emphasize, however, that our techniques are not restricted to one-dimensional problems.

\subsection*{Acknowledgements}
The authors would like to thank T. Weidl for drawing their attention to this problem and for helpful discussions. R.F. has profitted from discussions with T. Ekholm and H. Kovarik. The authors gratefully acknowledge the hospitality at Stuttgart University, KTH Stockholm and ESI Vienna, where parts of this work were done. This work was partially supported through Deutsche Forschungsgemeinschaft's (DFG) grant FR 2664/1-1, US National Science Foundation's grant PHY 06 52854 (R.F.), DFG grant SI 348/12-2, Engineering and Physical Sciences Research Council's grant EP/F029721/1 (S.M.) and DFG grant WE 1964/2 (S.V.).


\subsection{Main results}
\label{sec:main}

Let us state the precise conditions on the magnetic field and the electric potential. 

\begin{assumption}\label{ass:bv}
Let $B$ and $V$ be radial, real-valued and measurable functions with compact support in $\R^2$ such that
\begin{equation}
\label{eq:assbv}
\int_{\R^2} |B| \big(1+\ln_-|x|\big) \,\rmd x < \infty
\quad \text{and} \quad
\int_{\R^2} |V| \big(1+\ln_-|x|\big) \,\rmd x < \infty \,.
\end{equation}
\end{assumption}

Here and below $t_\pm\!:=\max\{\pm t,0\}$ for a number or a function $t$.
Let $A\in L_{2,\loc}(\R^2,\R^2)$ be a vector field with $\curl A= B$ (see \eqref{vector potential} for a convenient explicit choice). Under Assumption \ref{ass:bv} the operators $P-\alpha V$, $\alpha\in\R$, are defined through the closure of the quadratic forms
$$
\int_{\R^2} \big| \mathbf{\sigma}\cdot(-i\nabla +A)\psi\big|^2 \,\rmd x - \alpha \int_{\R^2} V |\psi|^2 \,\rmd x \,,
\qquad \psi\in C_0^\infty(\R^2,\C^2) \,.
$$
Using $\Phi$, $N$, $N'$ and $\Omega_k^\pm$ from \eqref{eq:flux}, \eqref{eq:n}, \eqref{eq:n'} and \eqref{eq:omega} we define
\begin{align}
 \label{eq:vm}
v_{k} & := \int_{\R^2} V |\Omega_{k}^\mp|^2 \,\rmd x \,,
\qquad \text{if} \ \pm\Phi>0\ \text{and} \ k=0,\ldots, N'-1 \,, \\
v_0^\pm & := \int_{\R^2} V |\Omega_{0}^\pm|^2 \,\rmd x \,,
\qquad \text{if} \ \Phi=0 \,,
\end{align}
and $m_k := \int_{\R^2} |\Omega_{k}^\mp|^2 \,\rmd x$ if $\pm\Phi>0$ and $k=0,\ldots,N-1$.
The main result of this paper are the following two theorems concerning the weak coupling asymptotics for Pauli operators.

\begin{theorem}[Case of noninteger magnetic flux]
 \label{main}
Let $B$ satisfy Assumption \ref{ass:bv} and let $V$ be a non-negative, not identically vanishing function satisfying Assumption \ref{ass:bv}. In addition, assume that $\Phi\in\R\setminus\Z$ and put $\mu:=\big\{|\Phi|\big\}$. Then for all sufficiently small $\alpha>0$ the operator $P-\alpha V$ has exactly $N+1$ negative eigenvalues $\lambda_1(\alpha),\ldots,\lambda_{N+1}(\alpha)$, and as $\alpha\to 0+$ one has for $j=1,\ldots,N$
\begin{equation}
 \label{eq:asympzeromodes}
\lambda_j(\alpha) = - \frac{v_{j- 1}}{m_{j- 1}}\alpha \bigg(1+
\begin{cases}
O\left(\alpha\right) & \text{if}\ j<|\Phi|-1 \\
O\left(\alpha^{\mu}\right) & \text{if}\ j>|\Phi|-1
\end{cases}
\bigg) \,,
\end{equation}
and
$$
\lambda_{N+1}(\alpha) =
- c_\mu v_N^{1/\mu}\alpha^\frac1\mu\big(1+ O(\alpha^{\min\{1, \frac1\mu- 1\}})\big)\,,
$$
where
\begin{equation}\label{c_nu}
c_\mu := \Big(\frac{2^{2\mu- 1}\Gamma(\mu)}{\Gamma(1-\mu)}\Big)^\frac1\mu \,.
\end{equation}
\end{theorem}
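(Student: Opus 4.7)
The plan is to exploit radial symmetry to decouple the two-dimensional spectral problem into a family of half-line problems, and then analyze each angular-momentum sector purely variationally, following the intuition that the weak coupling asymptotics are controlled by the spatial tails of the Aharonov--Casher zero modes $\Omega_k^\pm$. Assume for definiteness $\Phi>0$. With a radial gauge for $A$, and writing $\psi_+(r,\theta)=\rme^{ik\theta}u_k(r)$ and $\psi_-(r,\theta)=\rme^{-ik\theta}w_k(r)$, the factorization displayed in the introduction turns each sector into an explicit half-line quadratic form whose generalized zero mode is exactly $\Omega_k^\mp$. Because $\xi(r)=-\Phi\ln r+O(1)$ outside $\supp B$ by Newton's theorem, $\Omega_k^-$ decays like $r^{k-\Phi}$ and lies in $L_2$ iff $k<\Phi-1$; for $k=N$ it is bounded with decay rate $r^{-\mu}$. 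Every other sector carries no zero mode and, by a routine form comparison with the free Laplacian, contributes no negative eigenvalue for small $\alpha$, which pins the total count at exactly $N+1$.

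For each $k=0,\ldots,N-1$ the relevant sector is a half-line operator with a simple eigenvalue at $0$ perturbed by $-\alpha V$. Implementing Rayleigh--Schr\"odinger perturbation at the simple isolated eigenvalue $0$ as a two-sided variational bound, using $\Omega_k^-$ as the trial function and the reduced resolvent for the lower bound, yields
\[
\lambda_{k+1}(\alpha)=-\frac{v_k}{m_k}\alpha+R_k(\alpha).
\]
Naively $R_k(\alpha)=O(\alpha^2)$; however, the second-order term is expressed through the reduced resolvent applied to $V\Omega_k^-$, and when $k>|\Phi|-1$ this vector has nontrivial overlap with the slowly decaying critical tail associated with $\Omega_N^-$, so that $R_k(\alpha)$ degrades to $O(\alpha^{1+\mu})$. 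This produces the two cases of the error bound in \eqref{eq:asympzeromodes}.

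The heart of the proof is the critical channel $k=N$, and this is the step I expect to be the main obstacle, because the standard Birman--Schwinger machinery is unavailable here (the unperturbed Green's function of $P$ is not explicit in our generality). For the upper bound on $|\lambda_{N+1}(\alpha)|$ I would use a truncated trial function $\chi_R\,\Omega_N^-$: the kinetic term generates a boundary contribution of order $R^{-2\mu}$, the normalization grows as $R^{2-2\mu}$, and the potential integral stabilizes at $\alpha v_N$ since $V$ has compact support. Balancing in $R$ yields $|\lambda_{N+1}(\alpha)|\sim c_\mu v_N^{1/\mu}\alpha^{1/\mu}$. To reach the sharp constant $c_\mu=\bigl(2^{2\mu-1}\Gamma(\mu)/\Gamma(1-\mu)\bigr)^{1/\mu}$ one replaces the naive cutoff by the exact decaying solution of the tail half-line equation at energy $-\kappa^2$, which is a Macdonald function $K_\mu(\kappa r)$; the prefactor in the small-argument expansion $K_\mu(\kappa r)\sim 2^{\mu-1}\Gamma(\mu)(\kappa r)^{-\mu}$, matched against the power-law tail $r^{-\mu}$ of $\Omega_N^-$, produces precisely \eqref{c_nu}. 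The matching lower bound comes from a variational partition of the form that isolates the critical tail on $\{r>R_0\}$, where the form agrees exactly with the explicit tail half-line operator, and controls the inner region via compactness together with the Hardy-type inequality appropriate to the effective centrifugal exponent $\mu$. Finally, the error $O(\alpha^{\min\{1,1/\mu-1\}})$ records the next-order terms in the small-argument expansion of $K_\mu$ together with the compactly supported deviation of $\xi$ and $V$ from their asymptotic profiles.
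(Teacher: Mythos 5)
Your overall strategy --- radial decomposition into half-line operators indexed by angular momentum, identification of $\Omega_k^-$ as the generalized zero mode of each sector, and a Bessel-function trial state $K_\mu(\kappa r)$ matched to the $r^{-\mu}$ tail in the critical channel --- is the same as the paper's, and your upper-bound construction for $\lambda_{N+1}$ reproduces the paper's Proposition \ref{upper} essentially verbatim, including the origin of the constant $c_\mu$. However, there are three concrete gaps. First, in the sectors $j=1,\dots,N$ the eigenvalue $0$ of the half-line operator is \emph{not} isolated: the essential spectrum of each $T_m^-$ is $[0,\infty)$, so $0$ sits at the threshold and the reduced resolvent you invoke is unbounded; standard Rayleigh--Schr\"odinger perturbation theory does not apply. (The paper notes that the \emph{leading} order in this case follows from Simon's threshold perturbation result \cite{S2}, but obtains the remainder variationally.) Relatedly, your explanation of the degraded error $O(\alpha^{\mu})$ via ``overlap with the critical tail associated with $\Omega_N^-$'' cannot be right, since distinct angular momentum sectors are orthogonal; the degradation occurs \emph{within} the sector $j=N$, whose zero mode decays only like $r^{-(1+\mu)}$ and is therefore barely square-integrable, which is what weakens the second-order control.

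Second, the eigenvalue count. Dismissing the infinitely many non-critical sectors by ``a routine form comparison with the free Laplacian'' misses the point the paper explicitly flags: the qualitative criticality criterion gives no negative eigenvalue in each such sector separately, but one needs an estimate $t_m^\pm[\psi]\geqslant c\int V|\psi|^2 r\,\rmd r$ with a constant \emph{uniform in} $m$, and the potentials $W_m^\pm=(b+m/r)^2\pm B$ are not pointwise nonnegative, so a naive comparison fails. The paper achieves uniformity through a weighted Hardy (Muckenhoupt) inequality with weight $(\omega_m^\pm)^2$ and an explicit verification that the Muckenhoupt suprema are bounded in $m$ (Proposition \ref{hardy}); some substitute for this is indispensable. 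You also do not rule out a \emph{second} negative eigenvalue in a critical sector (the paper's Lemma \ref{simple}, via a rank-one Dirichlet decoupling at $R$). Third, your lower bound in the critical channel is too vague where it matters: to get the sharp constant $v_N$ one must show that $\int V\psi_\alpha^2\,r\,\rmd r$ equals $v_N+O(\alpha)$ for the actual eigenfunctions $\psi_\alpha$, which the paper extracts from a quantitative convergence $\psi_\alpha\to\psi_0$ in the energy norm on $(0,R)$ (Lemma \ref{vgs}) combined with the boundary inequality of Lemma \ref{energycomp}; ``compactness'' alone does not yield the stated remainder $O(\alpha^{\min\{1,1/\mu-1\}})$.
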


The result for integer flux takes the following form.

\begin{theorem}[Case of integer magnetic flux]
 \label{mainint}
Let $B$ satisfy Assumption \ref{ass:bv} and let $V$ be a non-negative, not identically vanishing function satisfying Assumption \ref{ass:bv}. In addition, assume that $\Phi\in\Z$. Then for all sufficiently small $\alpha>0$ the operator $P-\alpha V$ has exactly $N+2$ negative eigenvalues $\lambda_1(\alpha),\ldots,\lambda_{N+2}(\alpha)$.
As $\alpha\to 0+$ the eigenvalues $\lambda_1(\alpha),\ldots,\lambda_{N}(\alpha)$ satisfy \eqref{eq:asympzeromodes} and
$$
\lambda_j(\alpha) = - \frac{v_{j- 1}}{m_{j- 1}}\alpha\Big(1+ O\big(\alpha|\ln\alpha|\big)\Big) \qquad \text{if}\ j=|\Phi|-1 \in\{1,\ldots,N\} \,.
$$
Moreover, if $\Phi\in\Z\setminus\{0\}$, then
\begin{align*}
\lambda_{N+1}(\alpha) & = - 2v_N\frac{\alpha}{|\ln\alpha|} \bigg(1 + O\Big(  \frac{\ln|\ln\alpha |}{|\ln\alpha|}\Big)\bigg) \,,\\
\lambda_{N+2}(\alpha) & = -\exp\Big(- \frac2{v_{N+ 1}\alpha}\big(1+ O(\alpha) \big)\Big)\,,
\end{align*}
and if $\Phi=0$, then
\begin{align*}
\lambda_{1}(\alpha) & = -\exp\Big(- \frac2{v_0^+\alpha}\big(1+ O(\alpha) \big)\Big) \,,\\
\lambda_{2}(\alpha) & = -\exp\Big(- \frac2{v_0^-\alpha}\big(1+ O(\alpha) \big)\Big) \,. 
\end{align*}
\end{theorem}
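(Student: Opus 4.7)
The plan is to exploit the radial symmetry of $B$ and $V$ to decompose $P - \alpha V$ as an orthogonal direct sum of half-line operators indexed by angular momentum and spin, and to extract the weak coupling asymptotic of each contributing channel by purely variational means, consistent with the intuition stressed in the introduction that the spatial behavior of the resonance functions $\Omega_k^\pm$ drives the asymptotics. Choosing $A$ in the radial gauge so that $\xi$ is radial and using the factorization identity, the spin-up and spin-down channels decouple; separating variables $\psi_\pm(r,\theta) = r^{-1/2} e^{\pm i\ell\theta} u_\ell^\pm(r)$ turns $P$ into a direct sum of nonnegative half-line operators on $L_2(\R_+)$, each of the form $(D_\ell^\pm)^* D_\ell^\pm$ with a first-order $D_\ell^\pm$ whose zero-energy null space is spanned by the radial profile of $\Omega_\ell^\pm$.

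A Hardy-type estimate together with the compact support of $V$ shows that only finitely many sectors carry a negative eigenvalue for small $\alpha$, and in each of them at most one. Restricting to $\Phi = N > 0$ (the case $\Phi < 0$ is symmetric, and $\Phi = 0$ is treated by applying the analysis independently to the two spin channels with bounded resonances $\Omega_0^\pm$), three regimes arise depending on the decay of $\Omega_\ell^-$ at infinity, which behaves like $|x|^{\ell-\Phi}$. For $\ell = 0, \ldots, N-1$ the resonance is in $L_2$, producing $\lambda_{\ell+1} \sim -v_\ell\alpha/m_\ell$ by first-order Rayleigh--Schr\"odinger perturbation; the sharper error $O(\alpha|\ln\alpha|)$ at $\ell = N-1$ arises from second-order interaction with the logarithmically divergent resonance of the adjacent channel. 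For $\ell = N$ one has $|\Omega_N^-|^2 \sim |x|^{-2}$, a marginally non-integrable log-divergence driving the two-dimensional critical weak coupling $\lambda_{N+1} \sim -2v_N\alpha/|\ln\alpha|$. For $\ell = N+1$ the resonance $\Omega_{N+1}^-$ is bounded and does not decay, yielding the genuine 2D Schr\"odinger exponential asymptotic $\lambda_{N+2} \sim -\exp(-2/(v_{N+1}\alpha))$.

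In each regime both upper and lower bounds are obtained variationally. The upper bound uses $\chi_R \Omega_\ell^-$ as a trial function with $R$ chosen optimally: $R \to \infty$ for the $L_2$ resonances, $R \sim \alpha^{-1/2}$ for the log-divergent case (so that the truncated norm is of order $(\ln R)^{1/2}$ while the boundary kinetic energy contributes $(\ln R)^{-1}$), and $R$ exponentially large in $1/\alpha$ for the bounded resonance. The matching lower bound proceeds by orthogonally decomposing an arbitrary test function as $u = c\,\chi_R\Omega_\ell^- + w$ and showing that $(D_\ell^-)^* D_\ell^-$ restricted to the complement of $\chi_R\Omega_\ell^-$ satisfies an effective Hardy inequality forcing the $w$-contribution to the quadratic form to dominate its absorbed potential energy; the remaining one-dimensional minimization over $c$ reproduces the leading-order upper bound.

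The main obstacle is the sharp matching lower bound for the exponentially small eigenvalue $\lambda_{N+2}$ (and the two corresponding eigenvalues for $\Phi = 0$), where both sides must agree to within a factor $1 + O(\alpha)$ inside the exponent. This requires a precise variational inversion of $D_{N+1}^-$ on the orthogonal complement of the truncated resonance, playing the role of the logarithmic Green's function in the Birman--Schwinger treatment of 2D Schr\"odinger weak coupling; producing this inversion with the correct leading logarithmic constant uniformly in $\alpha$ is the technical heart of the argument. A secondary, still delicate, issue is the $O(\ln|\ln\alpha|/|\ln\alpha|)$ error term for $\lambda_{N+1}$, which requires carrying the variational expansion to second order in the small parameter $1/|\ln\alpha|$.
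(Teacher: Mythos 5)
Your overall architecture coincides with the paper's: radial decomposition into half-line operators indexed by angular momentum and spin, identification of the channels $m=0,\dots,-\Phi$ (for $\Phi=N>0$) via the decay of $\omega_m^-$, and a Hardy--Muckenhoupt bound, uniform in $m$, to show that all remaining channels are non-negative for one common small $\alpha$ (this uniformity is essential -- channel-by-channel criticality arguments do not suffice -- and you correctly invoke a Hardy estimate for it). Within each channel, however, your variational scheme differs from the paper's. For the upper bound the paper does not truncate the resonance: it glues the exact virtual ground state $\psi_0$ on $[0,R]$ ($R$ \emph{fixed}) to the exact exterior solution $K_\mu(\kappa r)$ and optimizes over $\kappa$; the sharp constants ($c_\mu$, the factor $2$ in $2v_N\alpha/|\ln\alpha|$, the $2/(v\alpha)$ in the exponent) then drop out of Bessel asymptotics rather than out of cutoff optimization. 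Your truncation route can be made to work, but note that your heuristic bookkeeping is off in places: for the $\mu=1$ channel the boundary kinetic cost of a cutoff at scale $R$ is $O(R^{-2})$, not $(\ln R)^{-1}$ (the latter is the $\mu=0$ cost), and the correct $R$ is then forced by $R^{-2}\ll\alpha$ rather than by balancing two logarithms. Also, your explanation of the $O(\alpha|\ln\alpha|)$ error at $j=|\Phi|-1$ as a ``second-order interaction with the adjacent channel'' cannot be right: distinct angular momentum channels are orthogonal and do not interact. In the paper that logarithm is intrinsic to the single channel with $\mu=2$, coming from the $\kappa^2|\ln\kappa|$ correction in the norm of the Bessel-glued trial function.

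The genuine gap is the lower bound, which you yourself flag as the technical heart but do not supply. Your proposed route (decompose $u=c\,\chi_R\Omega_\ell^-+w$ and prove an effective Hardy inequality on the complement, uniformly in $\alpha$, sharp enough to recover $1+O(\alpha)$ inside an exponential) is substantially harder than what the paper actually does, and it is not clear it closes. The paper's device is to observe that the true ground state $\psi_\alpha$, normalized at $r=R$, is \emph{explicitly} $K_\mu(\sqrt{|\lambda_\alpha|}\,r)$ outside the support of $V$ and $B$, converges in form norm on $(0,R)$ to $\psi_0$ with error $O(\alpha)$ (via the resolvent identity for the Dirichlet operator $T_R$ and the a priori bound $|\lambda_\alpha|\leqslant C\alpha$ from concavity), and satisfies the local energy inequality $\int_0^R(|\psi'|^2+W|\psi|^2)r\,\rmd r\geqslant-\mu|\psi(R)|^2$. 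Plugging these into $\lambda_\alpha\|\psi_\alpha\|^2=t[\psi_\alpha]-\alpha\int V|\psi_\alpha|^2r\,\rmd r$ reduces the lower bound to the same Bessel asymptotics used for the upper bound, and this is what delivers the $1+O(\alpha)$ in the exponent for the $\mu=0$ channels. Without an argument of comparable precision, your proposal establishes the upper bounds and the qualitative picture but not the stated two-sided asymptotics.
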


\bigskip

We emphasize that the meaning of the asymptotics in the case $\Phi\in\Z\setminus\{0\}$ is that $\ln \big|\lambda_{N+2}(\alpha)\big| = - 2v_{N+ 1}^{-1}\alpha^{-1} \big(1+ O(\alpha) \big)$, and similarly in the case $\Phi=0$.

\begin{remark}
The assumption that $V$ is non-negative can be somewhat relaxed. Indeed, our proof shows that for any $k\in\{0,\ldots,N'-1\}$ for which $v_k>0$ (with the obvious modification for $\Phi=0$) the operators $P-\alpha V$ have a negative eigenvalue with the asymptotics given in the theorem. This is in agreement with a result of Weidl \cite{W} who has shown that for $V\not\equiv 0$ one has
$$
\lim_{\alpha\to 0+} N(P-\alpha V) = \#\left\{ k\in\{0,\ldots,N'-1\} :\ v_k \geqslant 0 \right\}
$$
(with the obvious modification for $\Phi=0$). Here $N(P-\alpha V)$ denotes the number of negative eigenvalues, counting multiplicities, of $P-\alpha V$. We have not been able to compute the precise asymptotics of the eigenvalues corresponding to $v_k=0$.
\end{remark}

Let us comment on our assumptions. We assume that $V$ has compact support mainly for the sake of simplicity in order to avoid additional technicalities. With additional work one can probably also replace the support assumption on $B$ by a suitable short-range decay assumption. The assumption that $B$ and $V$ are radial, however, is crucial for us, at least if $|\Phi|\geqslant 1$, since it allows us to avoid orthogonality conditions in the case of several eigenvalues and instead to work with a finite number of \emph{ground state energies}. While physically reasonable we would find it mathematically desirable to remove this assumption.


\subsection{Weak coupling asymptotics for half-line operators}
\label{sec:main1d}

Because of the assumed radial symmetry of $B$ and $V$ Theorems \ref{main} and \ref{mainint} reduce to statements about families of half-line operators. In this subsection we shall formulate a weak coupling result for a more general class of one-dimensional operators. Throughout we work under

\begin{assumption}
\label{ass:w}
\begin{enumerate}
 \item[(a)] The real-valued function $W\in L_{1,\loc}(0,\infty)$ \footnote{$L_{1,\loc}(0,\infty)$ denotes the space of functions which are integrable on all compact subsets of $(0,\infty)$. In view of later applications we insist on the fact that $W$ may have a non-integrable singularity at the origin.}
 satisfies
\begin{equation}
\label{W assumption}
W(r)= \mu^2 r^{-2},\qquad r\geqslant R\,,
\end{equation}
for some $R>0$ and $\mu\geqslant 0$.
\item[(b)] For some $0\leqslant a<1$ and some $M\geqslant 0$ one has for all $\psi\in C_0^\infty(0,\infty)$
\begin{equation}
 \label{eq:formbdd}
\int_0^\infty W_- |\psi|^2 r\,\rmd r \leqslant a \int_0^\infty |\psi'|^2 r\,\rmd r + M \int_0^\infty |\psi|^2 r\,\rmd r \,.
\end{equation}
\item[(c)] For all $\psi\in C_0^\infty(0,\infty)$ one has
\begin{equation}
 \label{eq:nonneg}
\int_0^\infty \left( |\psi'|^2 + W |\psi|^2 \right)\, r\,\rmd r \geqslant 0 \,.
\end{equation}
\end{enumerate}
\end{assumption}

The closure $t$ of the quadratic form on the left hand side of \eqref{eq:nonneg} in the Hilbert space $L_2(\R_+,r\rmd r)$ has domain
$$
\dom t = \big\{ \psi\in L_2(\R_+,r\rmd r) : \ \psi', W_+^{1/2}\psi \in L_2(\R_+,r\rmd r) \big\} \,.
$$
It generates a non-negative self-adjoint operator $T$ which acts on functions $\psi\in\dom T$ according to
$$
T\psi = - r^{-1} (r\psi')' + W\psi \,.
$$
We denote by $\mathcal V$ the space of all functions $V\in L_{1,\loc}(0,\infty)$ with compact support which are form compact with respect to $T$, that is, the operator $(T+1)^{-1/2} V (T+1)^{-1/2}$ is compact. For $V\in\mathcal V$ the operator $T-V$ is defined as usual via its quadratic form with domain $\dom t$. By Weyl's theorem its negative spectrum (if non-empty) consists of discrete eigenvalues of finite multiplicites. We are interested in the behavior of the lowest eigenvalue of the operators $T-\alpha V$ in the weak coupling limit $\alpha\to 0$.

Along with the form domain $\dom t$ of $T$ we shall use the \emph{local form domain} of $T$ defined by
$$
\dom_\loc t := \Big\{ \psi\in L_{2,\loc}\big([0,\infty),r\rmd r\big) : \ \psi', W_+^{1/2}\psi \in L_{2,\loc}\big([0,\infty),r\rmd r\big) \Big\} \,.
$$
We shall say that $\psi$ is a \emph{weak solution} of the equation $T\psi =0$ if $\psi\in\dom_\loc t$ and
$$
t[\phi,\psi]=0
\quad\text{for any}\ \phi\in\dom t
\ \text{with compact support in}\ [0,\infty) \,.
$$
Note that this notion incorporates a boundary condition at zero but no decay condition at infinity.

Here is the key for proving Theorems \ref{main} and \ref{mainint}:

\begin{theorem}[Weak coupling asymptotics for half-line operators]
\label{main1d}
Suppose that Assumption \ref{ass:w} holds and that
\begin{align}\label{psi_0 asymptotic T}
& \text{there exists a positive weak solution $\psi_0$ of the equation $T\psi_0=0$ such that}\notag \\
& \qquad \psi_0(r)= r^{-\mu}, \quad r\geqslant R \,,\\
& \text{with $\mu$ and $R$ from \eqref{W assumption}.} \notag
\end{align}
Let $V\in\mathcal V$ such that
\begin{equation}\label{U assumption T}
v := \int_0^\infty V(r)\psi_0^2(r)r\rmd r> 0 \,.
\end{equation}
Then for all sufficiently small $\alpha> 0$, $T- \alpha V$ has a unique negative eigenvalue $\lambda_\alpha$, and as $\alpha\to 0+$ one has
$$
\lambda_\alpha =
\begin{cases}
-\exp\Big(- \dfrac2{\alpha v} \big(1+ O(\alpha) \big)\Big) \,, & \mu=0 \,,\\
- c_\mu \left(\alpha v\right)^\frac1\mu\big(1+ O(\alpha^{\min\{1, \frac1\mu- 1\}})\big) \,, & \mu\in(0,1) \,, \\
- \dfrac{2\alpha v}{|\ln\alpha|} \bigg(1 + O\Big(  \dfrac{\ln|\ln\alpha|}{|\ln\alpha|}\Big)\bigg) \,, & \mu=1 \,, \\
- \dfrac{\alpha v}{\int_0^\infty\psi_0^2(r)rdr} \bigg(1+ 
\begin{cases} O(\alpha^{\min\{1, \mu- 1\}})\,,& \mu\neq 2\\ O( \alpha|\ln\alpha|) \,,& \mu= 2\end{cases}
\bigg) \,, & \mu>1\,,
\end{cases}
$$
where $c_\mu$ is given by \eqref{c_nu}.
\end{theorem}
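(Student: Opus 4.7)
The approach is to use a purely variational method based on the ground-state substitution $\psi=\psi_0\eta$. Since $T\psi_0=0$ holds weakly, a straightforward integration by parts (valid for $\eta$ smooth with compact support in $[0,\infty)$) yields the key identity
\begin{equation*}
t[\psi_0\eta]=\int_0^\infty\psi_0^2|\eta'|^2\,r\,\rmd r.
\end{equation*}
Consequently the negative eigenvalue of $T-\alpha V$ is the minimum of the Rayleigh quotient
\begin{equation*}
\mathcal{R}[\eta]=\frac{\int_0^\infty\psi_0^2|\eta'|^2\,r\,\rmd r-\alpha\int_0^\infty V\psi_0^2\eta^2\,r\,\rmd r}{\int_0^\infty\psi_0^2\eta^2\,r\,\rmd r},
\end{equation*}
taken over the weighted Sobolev space with measure $\psi_0^2\,r\,\rmd r$. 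Existence and uniqueness of a single negative eigenvalue for small $\alpha$ would follow from exhibiting one trial function with $\mathcal{R}<0$ combined with the one-dimensionality of the zero-mode of the transformed operator $-(\psi_0^2 r)^{-1}\partial_r(\psi_0^2 r\partial_r)$ and the form-compactness of $V$.

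For the upper bound I would use the trial function provided by the free outer resolvent, normalized so that $\eta_\kappa(0^+)=1$: for $\mu>0$
\begin{equation*}
\eta_\kappa(r):=\frac{\kappa^\mu}{2^{\mu-1}\Gamma(\mu)}\,r^\mu K_\mu(\kappa r),
\end{equation*}
and for $\mu=0$ the analogous trial function built from $K_0(\kappa r)$. Using the Bessel identity $\partial_r[r^\mu K_\mu(\kappa r)]=-\kappa r^\mu K_{\mu-1}(\kappa r)$ and the explicit integrals $\int_0^\infty s\,K_\nu^2(s)\,\rmd s=\tfrac12\Gamma(1+\nu)\Gamma(1-\nu)$ valid for $|\nu|<1$, the numerator and denominator of $\mathcal{R}[\eta_\kappa]$ have closed-form leading expressions in $\kappa$. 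Optimization over $\kappa$ yields for $\mu\in(0,1)$ the balance $\kappa^{2\mu}=c_\mu^\mu\alpha v$ and $\mathcal{R}_{\min}=-\kappa^2=-c_\mu(\alpha v)^{1/\mu}$; for $\mu>1$ one lets $\kappa\to 0$ and obtains $\mathcal{R}\to-\alpha v/\|\psi_0\|^2$; the critical cases $\mu=1$ and $\mu=0$ involve merging terms in the Bessel expansion, producing the logarithmic enhancements that force the $\alpha|\ln\alpha|^{-1}$ and exponential scalings respectively.

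For the lower bound, the main technical point, I would split the integration at a fixed $R_0\geqslant R$ with $\supp V\subset[0,R_0]$. On the outer region, where $\psi_0(r)=r^{-\mu}$ exactly and $V\equiv 0$, the quadratic form $\int_{R_0}^\infty(r^{1-2\mu}|\eta'|^2-\lambda r^{1-2\mu}\eta^2)\,\rmd r$ with prescribed boundary value $\eta(R_0)$ and exponential decay at infinity is minimized precisely by the $K_\mu$-Bessel solution at energy $\lambda=-\kappa^2$, giving a Dirichlet-to-Neumann-type lower bound whose coefficient has an explicit small-$\kappa$ asymptotic determined by the subleading term of $K_\mu$ at zero. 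Combined with a compact-perturbation/Poincar\'e analysis on the inner region $[0,R_0]$, which shows that the true eigenfunction satisfies $\alpha\int V\psi_0^2\eta^2\,r\,\rmd r=\alpha v\,\eta(R_0)^2(1+o(1))$, this produces an implicit equation between $\kappa$ and $\alpha$ whose leading-order solution coincides with the upper bound. Tracking the next-order terms yields the error bounds $O(\alpha^{\min\{1,1/\mu-1\}})$, $O(\alpha|\ln\alpha|)$, $O(\ln|\ln\alpha|/|\ln\alpha|)$, and $O(\alpha)$ in the four regimes.

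The main obstacle is the case-by-case control of these subleading corrections, since the small-argument behavior of $K_\mu$ is qualitatively different in each regime. The critical cases $\mu\in\{0,1\}$ are particularly delicate: the two leading terms of the Bessel expansion coalesce, and the optimal $\kappa$ must be identified by balancing leading and subleading contributions at an exponential or $\alpha|\ln\alpha|^{-1}$ scale rather than a pure power.
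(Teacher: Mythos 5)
Your strategy is essentially the one in the paper, rewritten through the ground state substitution $\psi=\psi_0\eta$: your trial function $\psi_0\eta_\kappa$ coincides on $[R,\infty)$ with the paper's $\varphi_\kappa$ (the outer Bessel solution $K_\mu(\kappa r)$ matched to $\psi_0$), the optimization over $\kappa$ gives the same $\kappa^2=c_\mu(\alpha v)^{1/\mu}$, and your outer Dirichlet-to-Neumann bound together with the inner analysis is exactly the combination of Lemma \ref{energycomp} with the fact that the true eigenfunction equals the outer Bessel solution on $[R/2,\infty)$. Indeed, restricting the identity $t[\psi_0\eta]=\int\psi_0^2|\eta'|^2\,r\,\rmd r$ to $(0,R)$ produces the boundary term $-\mu R^{-2\mu}|\eta(R)|^2$, which is precisely \eqref{eq:minlocal} and Lemma \ref{energycomp}. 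The leading-order constants in your upper bound check out.

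Two steps of your lower bound are under-specified, and they are where the real work lies. First, the inner estimate $\alpha\int V\psi_0^2\eta^2\,r\,\rmd r=\alpha v\,\eta(R_0)^2(1+o(1))$ with only $o(1)$ cannot yield the stated remainders; you need $1+O(\alpha)$. The two inputs that produce this rate, absent from your sketch, are the a priori bound $|\lambda_\alpha|\leqslant\const\alpha$ (immediate from concavity of $\alpha\mapsto\lambda_\alpha$, Lemma \ref{linear}) and the positive definiteness of the operator on $(0,R)$ with a Dirichlet condition at $R$ --- a genuine point, proved via unique solvability of the Cauchy problem; note that the inner operator with the natural boundary condition at $R$ has $\psi_0$ as a zero mode, so a plain Poincar\'e/compactness argument does not close without the Dirichlet restriction. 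Given these, the eigenvalue equation yields $t_R[\psi_\alpha-\psi_0]=O(\alpha^2)$ (Lemma \ref{vgs}) and hence the $O(\alpha)$ corrections. Second, uniqueness of the negative eigenvalue does not follow from ``one-dimensionality of the zero mode''; the clean argument is that imposing a Dirichlet condition at $R$ is a rank-one perturbation of the resolvent and both decoupled halves are non-negative (Lemma \ref{simple}), the inner one again by positive definiteness of $T_R$. Both gaps are fillable, and with them filled your argument is the paper's.
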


We remark that for $\mu>1$ the function $\psi_0$ is square-integrable, and hence $0$ is an eigenvalue of $T$. In this case, the leading order asymptotics of $\lambda_\alpha$ follow from the abstract arguments of \cite{S2}. Since in our approach there is hardly any difference between the cases $\mu>1$ and $0\leqslant\mu\leqslant 1$, we include an independent proof which, moreover, yields a remainder estimate. We note in passing that by concavity the $O$-term is positive for $\mu>1$.

In general, the coefficient $v$ in \eqref{U assumption T} depends implicitly on the background potential $W$ through the function $\psi_0$. When applying Theorem \ref{main1d} in the proof of Theorems \ref{main} and \ref{mainint}, however, $W$ will have a specific form which allows to determine $\psi_0$ explicitly. Hence for given $V$ the coefficient $v$ can be computed explicitly.

\bigskip

The main point in Theorem \ref{main1d} is that it connects the existence of a positive solution of the equation $T\psi_0 =0$ with a certain behavior at infinity to the spectral information about the existence of negative eigenvalues. This connection is \emph{quantitative} in the sense that the decay of $\psi_0$ determines the size of the eigenvalue in the weak coupling limit. An initial step in our proof of Theorem \ref{main1d} will be a \emph{qualitative} version of this correspondence, which we single out as Proposition \ref{vl} below. We emphasize that this qualitative statement is a well-studied feature of second order elliptic operators \cite{P}, though our assumptions on the potentials seem to be much weaker than those typically imposed in the literature.

\begin{proposition}[Characterization of criticality]
\label{vl}
Suppose that Assumption \ref{ass:w} holds. Then Assumption \eqref{psi_0 asymptotic T} is equivalent to each of the following:
\begin{enumerate}
\item[\emph{(i)}] 
For \emph{any} non-negative $0\not\equiv V\in\mathcal V$ the spectrum of $T- V$ in $(-\infty, 0)$ is nonempty.
\item [\emph{(ii)}]
For \emph{some} $V\in\mathcal V$ and any $\alpha>0$ the spectrum of $T- \alpha V$ in $(-\infty, 0)$ is nonempty.
\end{enumerate}
\end{proposition}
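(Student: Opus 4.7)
The plan is to prove the cycle \eqref{psi_0 asymptotic T} $\Rightarrow$ (i) $\Rightarrow$ (ii) $\Rightarrow$ \eqref{psi_0 asymptotic T}, with the middle implication being immediate (take any non-negative $V\in\mathcal V$ with $V\not\equiv 0$). The starting point for \eqref{psi_0 asymptotic T} $\Rightarrow$ (i) is the \emph{ground state substitution} identity: for real $\phi\in C_0^\infty(0,\infty)$, testing the weak equation $T\psi_0=0$ against $\phi^2\psi_0\in\dom t$ and rearranging yields
\begin{equation*}
t[\phi\psi_0] = \int_0^\infty |\phi'|^2 \psi_0^2\, r\,\rmd r.
\end{equation*}
Given this identity, I would construct explicit logarithmic cutoffs $\chi_n$ equal to $1$ on $[0,n]$, supported in $[0,n^2]$, with $\chi_n'(r)= -1/(r\ln n)$ on $[n,n^2]$. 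Using $\psi_0(r)=r^{-\mu}$ for $r\geqslant R$, a direct calculation shows $\int |\chi_n'|^2 \psi_0^2\, r\,\rmd r\to 0$ as $n\to\infty$ for every $\mu\geqslant 0$, while $\int V(\chi_n\psi_0)^2\, r\,\rmd r\to \int V\psi_0^2\, r\,\rmd r>0$ by monotone convergence (using the compact support of $V$, $V\not\equiv 0$, and $\psi_0>0$). Consequently, $\chi_n\psi_0$ makes the quadratic form of $T-V$ strictly negative for large $n$, so $T-V$ has a negative eigenvalue.

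For the substantive direction (ii) $\Rightarrow$ \eqref{psi_0 asymptotic T}, fix $V$ as in (ii) and let $\lambda_\alpha<0$ be the lowest eigenvalue of $T-\alpha V$ with positive eigenfunction $\psi_\alpha$ (positivity follows from a Perron--Frobenius argument for the associated semigroup). Form compactness of $V$ together with $T\geqslant 0$ forces $\lambda_\alpha\to 0$ as $\alpha\to 0+$. Enlarging $R$ so that $\supp V\subset [0,R]$, the eigenvalue equation on $[R,\infty)$ is a modified Bessel equation, and $L_2$-integrability selects the decaying solution $\psi_\alpha(r)=c_\alpha K_\mu\bigl(\sqrt{-\lambda_\alpha}\,r\bigr)$. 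I normalize by $\psi_\alpha(R)=R^{-\mu}$ and invoke the small-argument asymptotics $K_\mu(z)\sim \tfrac12\Gamma(\mu)(2/z)^{\mu}$ for $\mu>0$ (and $K_0(z)\sim -\ln z$) to conclude $\psi_\alpha(r)\to r^{-\mu}$ pointwise on $[R,\infty)$. For $r\in(0,R)$, local uniform bounds and equicontinuity of $\{\psi_\alpha\}$ (from the ODE on compact subintervals of $(0,\infty)$ together with the normalization at $R$) yield a subsequential limit $\psi_0$. Passing to the limit in the weak equation produces a non-negative weak solution of $T\psi_0=0$ in $\dom_\loc t$; a Harnack or maximum-principle argument upgrades this to $\psi_0>0$, and by construction $\psi_0(r)=r^{-\mu}$ on $[R,\infty)$.

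The main obstacle is controlling $\psi_\alpha$ uniformly near the origin so as to extract the limit as a genuine element of $\dom_\loc t$: the possible non-integrable singularity of $W$ at $0$ permitted by Assumption \ref{ass:w} forces one to use the form-boundedness estimate \eqref{eq:formbdd} to pass to the limit in the quadratic form, and to correctly track the boundary behavior imposed by membership in $\dom t$. A secondary technical point is justifying the ground state substitution identity rigorously, which amounts to verifying that $\phi^2\psi_0$ is an admissible test function in the sense of the definition of weak solution for $\phi$ in a suitably enlarged class including the cutoffs $\chi_n$ above.
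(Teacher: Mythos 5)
Your overall architecture --- the cycle \eqref{psi_0 asymptotic T} $\Rightarrow$ (i) $\Rightarrow$ (ii) $\Rightarrow$ \eqref{psi_0 asymptotic T} with (i) $\Rightarrow$ (ii) trivial --- is the same as the paper's. For the first implication your route is genuinely different and works: the paper glues $\psi_0$ on $[0,R]$ to a rescaled Bessel tail $K_\mu(\kappa r)$ (the trial functions $\varphi_\kappa$ of Proposition \ref{upper}) and computes $t[\varphi_\kappa]$, $\|\varphi_\kappa\|^2$ asymptotically, because it needs the sharp upper bound on $\lambda_\alpha$ for Theorem \ref{main1d}; for Proposition \ref{vl} alone your ground-state substitution $t[\phi\psi_0]=\int_0^\infty|\phi'|^2\psi_0^2\,r\,\rmd r$ combined with logarithmic cutoffs is simpler and suffices, since $(\ln n)^{-2}\int_n^{n^2}r^{-1-2\mu}\,\rmd r\to0$ for every $\mu\geqslant0$ and $\chi_n^2\psi_0$ is an admissible test function because $\psi_0\in\dom_\loc t$ and $\chi_n$ is compactly supported and identically $1$ near the origin. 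What you lose relative to the paper is only the quantitative information, which is not needed here.

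The implication (ii) $\Rightarrow$ \eqref{psi_0 asymptotic T} has a genuine gap, and it is exactly at the point you flag without resolving. On $[R,\infty)$ your Bessel computation matches the paper. On $(0,R)$, ``local uniform bounds and equicontinuity from the ODE on compact subintervals of $(0,\infty)$'' produce at best a subsequential limit in $C_{\loc}(0,\infty)$ solving the equation distributionally away from $r=0$. This neither shows $\psi_0\in\dom_\loc t$ up to the origin nor that $t[\phi,\psi_0]=0$ for test functions $\phi$ not vanishing near $0$ --- and that is precisely the boundary condition encoded in the definition of weak solution, without which the conclusion is vacuous (the ODE always has a solution behaving like $r^{-\mu}$ at infinity; the content of \eqref{psi_0 asymptotic T} is that the solution selected by the form domain at $r=0$ is the decaying one and is positive). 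Note also that under the normalization $\psi_\alpha(R)=R^{-\mu}$ the global quantities $\|\psi_\alpha\|$ and $t[\psi_\alpha]$ are \emph{not} uniformly bounded when $\mu\leqslant1$ (the eigenfunctions spread out as $\alpha\to0$), so no direct weak-compactness argument in $\dom t$ is available. The paper's mechanism is the missing idea: introduce the Dirichlet operator $T_R$ on $(0,R)$, show it is \emph{positive definite}, define $\psi_0:=F-T_R^{-1}TF$ for some $F\in\dom T$ with $F(R)=R^{-\mu}$, and prove the identity \eqref{eq:phialpha} for $\psi_\alpha-\psi_0$, which together with the a priori bound $|\lambda_\alpha|\leqslant\const\alpha$ of Lemma \ref{linear} yields the uniform energy estimate $t_R[\psi_\alpha-\psi_0]\leqslant\const\alpha^2$ of \eqref{eq:conv}. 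It is this estimate on $(0,R)$, not interior ODE compactness, that controls $\psi_\alpha$ up to $r=0$ in the form norm, places the limit in $\dom_\loc t$ with the correct boundary behavior, and (as a bonus) gives the quantitative convergence used later for the lower bound. Some uniform-in-$\alpha$ estimate of this kind on $(0,R)$ must be supplied to close your argument.
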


The structure of the proofs of Theorem \ref{main1d} and Proposition \ref{vl} is as follows. In Section \ref{sec:upper} we assume the existence of a function $\psi_0$ as in \eqref{psi_0 asymptotic T}. Starting from this function we construct a family of trial functions with negative energy. This implies part (i) in Proposition \ref{vl} and gives us the upper bound claimed in Theorem \ref{main1d}. In Section \ref{sec:lower} we assume the existence of a potential $V$ as in part (ii) of Proposition \ref{vl}. Starting from the family of eigenfunctions $\psi_\alpha$ of the corresponding operators we construct a function $\psi_0$ as in \eqref{psi_0 asymptotic T}. Moreover, controlling the convergence of $\psi_\alpha$ to $\psi_0$ will allow us to prove the lower bound claimed in Theorem \ref{main1d}. Uniqueness of the eigenvalue is the content of Lemma \ref{simple}.


\section{Proof of Theorem \ref{main1d} and Proposition \ref{vl}}\label{sec:proofs}

\subsection{The upper bound}\label{sec:upper}

In this subsection we prove that \eqref{psi_0 asymptotic T} implies (i) in Proposition \ref{vl} and we derive the upper bound on $\lambda_\alpha$ stated in Theorem \ref{main1d}. Both follow immediately from

\begin{proposition}\label{upper}
Assume that there exists a function $\psi_0$ as in \eqref{psi_0 asymptotic T} and let $V\in\mathcal V$ satisfy \eqref{U assumption T}. Then $\lambda_\alpha := \inf\spec (T-\alpha V)<0$ for any $\alpha>0$ and, as $\alpha\to 0$,
$$
\lambda_\alpha \leqslant
\begin{cases}
-\exp\Big(- \dfrac2{\alpha v} \left(1+ \const \alpha\right)\Big) \,, & \mu=0 \,,\\
- c_\mu \left(\alpha v\right)^\frac1\mu(1- \const\alpha^{\min\{1, \frac1\mu- 1\}}) \,, & \mu\in(0,1) \,, \\
- \dfrac{2\alpha v}{|\ln\alpha|} \Big(1 - \const  \dfrac{\ln|\ln\alpha |}{|\ln\alpha |}\Big) \,, & \mu=1 \,, \\
- \dfrac{\alpha v}{\int_0^\infty\psi_0^2(r)rdr} \,, & \mu>1\,,
\end{cases}
$$
where $c_\mu$ is given by \eqref{c_nu}.
\end{proposition}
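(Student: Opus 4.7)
The plan is to establish the stated upper bound by testing the Rayleigh quotient of $T-\alpha V$ on a trial function of the form $\psi=\psi_0\eta$, with $\eta$ a radial cutoff to be chosen. Enlarging $R$ if necessary so that $\supp V\subseteq[0,R]$, I fix $\eta\equiv 1$ on $[0,R]$ and exploit the ground-state substitution identity
\[
t[\psi_0\eta]=\int_0^\infty \psi_0^2\,|\eta'|^2\,r\,\rmd r\,,
\]
obtained by expanding $|(\psi_0\eta)'|^2=\eta^2|\psi_0'|^2+2\eta\eta'\psi_0\psi_0'+\psi_0^2|\eta'|^2$ and using that $t[\eta^2\psi_0,\psi_0]=0$ by the weak equation $T\psi_0=0$; admissibility of $\eta^2\psi_0$ as a test function is arranged by an additional truncation at a large radius followed by passage to the limit. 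Since $\eta\equiv 1$ on $\supp V$, the Rayleigh quotient becomes
\[
\mathcal R[\eta]=\frac{\int_R^\infty \psi_0^2\,|\eta'|^2\,r\,\rmd r-\alpha v}{\int_0^R\psi_0^2\,r\,\rmd r+\int_R^\infty \psi_0^2\,\eta^2\,r\,\rmd r}\,.
\]

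If $\mu>1$ then $\psi_0\in L_2(r\,\rmd r)$ and the choice $\eta\equiv 1$ globally immediately yields the clean bound $\lambda_\alpha\leqslant -\alpha v/\int_0^\infty\psi_0^2\,r\,\rmd r<0$. For $0\leqslant\mu\leqslant 1$ a genuine cutoff is needed, and I would take the one modelled on the $L_2$-solution at energy $-\kappa^2$:
\[
\eta_\kappa(r):=\frac{r^\mu K_\mu(\kappa r)}{R^\mu K_\mu(\kappa R)}\,,\qquad r\geqslant R\,,
\]
with $\kappa>0$ a free parameter. The algebraic identity $(r^\mu K_\mu(\kappa r))'=-\kappa r^\mu K_{\mu-1}(\kappa r)$ linearises $\eta_\kappa'$, and, after the substitution $s=\kappa r$, converts the two integrals in $\mathcal R[\eta_\kappa]$ into tails of $\int_0^\infty sK_{\mu-1}^2(s)\,\rmd s$ and $\int_0^\infty sK_\mu^2(s)\,\rmd s$ rescaled by $R^{2\mu}K_\mu^2(\kappa R)$. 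For $\mu\in(0,1)$ both Bessel integrals are finite and evaluate to $|\nu|\Gamma(|\nu|)\Gamma(1-|\nu|)/2$ (with $\nu=\mu-1$ and $\nu=\mu$, respectively); coupling this with the small-argument asymptotics $K_\mu(s)\sim\Gamma(\mu)2^{\mu-1}s^{-\mu}$ as $s\to 0$ and writing $B_\mu:=\Gamma(1-\mu)/(2^{2\mu-1}\Gamma(\mu))$, I arrive at
\[
\mathcal R[\eta_\kappa]=\frac{1-\mu}{\mu}\,\kappa^2-\frac{\alpha v}{\mu B_\mu}\,\kappa^{2-2\mu}+\text{(lower order)}\,.
\]
Optimising over $\kappa$ gives $\kappa^{2\mu}=\alpha v/B_\mu$ and hence $\lambda_\alpha\leqslant -c_\mu(\alpha v)^{1/\mu}$ with $c_\mu$ as in \eqref{c_nu}.

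The borderline cases $\mu=0$ and $\mu=1$ are handled by the same recipe, but now one or both Bessel integrals are logarithmically divergent at $s=0$: for $\mu=1$, $sK_1^2(s)\sim 1/s$ forces $\int_{\kappa R}^1 sK_1^2(s)\,\rmd s\sim|\ln(\kappa R)|$, and for $\mu=0$ the asymptotics $K_0(s)\sim -\ln s$ produce analogous logarithms. Optimising in $\kappa$ then leads to $\kappa^2\sim\alpha/|\ln\alpha|$ when $\mu=1$ and $\kappa\sim\exp(-1/(\alpha v))$ when $\mu=0$; the factor of $2$ in $|\ln(\kappa^2)|=2|\ln\kappa|$ supplies the coefficient $2$ in the stated leading terms $-2\alpha v/|\ln\alpha|$ and $-\exp(-2/(\alpha v)\,(1+O(\alpha)))$, respectively. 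The main obstacle will be the quantitative bookkeeping: I must replace the tails $\int_{\kappa R}^\infty$ by $\int_0^\infty$ at a controlled cost, absorb the transition zone near $r=R$ (where the leading-order Bessel asymptotics are not yet accurate) into the stated remainders, and in the logarithmic cases carry out the optimisation in $\kappa$ to sufficient accuracy to recover the subleading remainder $\ln|\ln\alpha|/|\ln\alpha|$ appearing when $\mu=1$.
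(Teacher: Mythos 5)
Your proposal is correct and follows essentially the same route as the paper: the trial function $\psi_0\eta_\kappa$ you propose is, on $[R,\infty)$, exactly $K_\mu(\kappa r)/(R^\mu K_\mu(\kappa R))$, which is the trial function $\varphi_\kappa$ used in the paper, and the choice $\kappa^{2\mu}=\alpha v/B_\mu$ is the same as the paper's. The only genuine difference is how the form is evaluated. The paper splits $t[\varphi_\kappa]$ at $r=R$, uses an integration by parts to show $\int_0^R(|\psi_0'|^2+W\psi_0^2)r\,\rmd r=-\mu R^{-2\mu}$, and then evaluates the Bessel integrals on $[R,\infty)$ via tables; you instead use the ground-state substitution $t[\psi_0\eta]=\int\psi_0^2|\eta'|^2 r\,\rmd r$, which absorbs the same boundary term automatically (since $\eta'\equiv 0$ on $[0,R]$) and, combined with the recursion $(r^\mu K_\mu(\kappa r))'=-\kappa r^\mu K_{\mu-1}(\kappa r)$, reduces the numerator to the single tail integral $R^{-2\mu}K_\mu^{-2}(\kappa R)\int_{\kappa R}^\infty sK_{\mu-1}^2(s)\,\rmd s$. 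This is a slightly cleaner piece of bookkeeping, and I verified that it agrees algebraically with the paper's expression, so the leading-order constants come out identically. You correctly flag the remaining work: justifying the ground-state substitution for the non-compactly-supported $\eta$ (the paper handles the analogous point through the weak-solution definition and the observation that no boundary term arises at $0$), and tracking the $O(\kappa^{2\min\{1,2\mu\}})$-type remainders in the Bessel asymptotics, the $O(1)$ contribution from $\int_0^R\psi_0^2\,r\,\rmd r$ in the denominator when $0\leqslant\mu\leqslant 1$, and the second-order optimisation in $\kappa$ in the logarithmic cases $\mu=0,1$ that produces the $\exp(-2(1+\const\alpha)/(\alpha v))$ and $\ln|\ln\alpha|/|\ln\alpha|$ correction factors. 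These steps are carried out in the paper and present no obstruction to your version.
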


\begin{proof}{}
First assume that $\mu> 1$. Then $\psi_0$ is square-integrable and $t[\psi_0]= 0$. Hence by the variational principle
$$
\lambda_\alpha \leqslant \frac{t[\psi_0] - \alpha \int_0^\infty V(r)\psi_0^2(r)r\,\rmd r}{\int_0^\infty\psi_0^2(r)r\,\rmd r}
= - \Big(\int_0^\infty\psi_0^2(r)r\,\rmd r\Big)^{-1} \alpha v \,,
$$
as claimed. In the remainder of this proof we shall assume that $0\leqslant\mu\leqslant 1$. Let $R>0$ such that $W(r)=\mu^2 r^{-2}$ and $V(r)=0$ for $r\geqslant R/2$ and define for any $\kappa> 0$
\begin{equation}\label{eq:trial}
\varphi_\kappa(r):= 
\begin{cases}
\psi_0(r)\,, & r\leqslant R\,,\\ 
\dfrac{K_\mu\big(\kappa r\big)}{R^\mu K_\mu(\kappa R)}\,,& r> R\,.
\end{cases}
\end{equation}
Here $K_\mu$ is the modified Bessel function of order $\mu$, see \cite{GR}.
The function $\phi_\kappa$ belongs to the form domain of $T-\alpha V$ and we claim that for small $\kappa>0$
\begin{equation}\label{numerator asymptotic}
t[\varphi_\kappa]= 
\begin{cases}
-(\ln\kappa)^{-1}+ O\big(|\ln\kappa|^{-2}\big)\,,& \mu= 0\,,\\ 
2^{1- 2\mu}(1- \mu)\dfrac{\Gamma(1- \mu)}{\Gamma(\mu)}\kappa^{2\mu} + O(\kappa^{2\min\{1, 2\mu\}})\,,& \mu\in (0, 1)\,, \\
O(\kappa^2) \,,& \mu= 1\,,
\end{cases}
\end{equation}
and
\begin{equation}\label{denominator asymptotic}
\|\varphi_\kappa\|^2= 
\begin{cases}
O\big(\kappa^{-2}|\ln\kappa|^{-2}\big)\,,& \mu= 0\,,\\ 
2^{1- 2\mu}\mu\dfrac{\Gamma(1- \mu)}{\Gamma(\mu)}\kappa^{-2(1-\mu)}+ O(\kappa^{2\min\{-1+ 2\mu, 0\}})\,,& \mu\in (0, 1) \,,\\
-\ln\kappa+ O(1)\,,& \mu= 1\,.
\end{cases}
\end{equation}
To prove the first relation, we multiply the equation $T\psi_0=0$ by $r\psi_0$ and integrate by parts. Using the definition of a weak solution and that $\psi_0\in\dom_\loc t$ one sees that there appear no boundary terms at zero and one obtains
\begin{equation}\label{eq:minlocal}
\int_0^R \left(|\psi_0'|^2 + W|\psi_0|^2\right)\,r\rmd r = - \mu R^{-2\mu} \,.
\end{equation}
Hence
\begin{equation}\label{t[varphi_lambda]}
t[\varphi_\kappa] = -\mu R^{-2\mu} + R^{-2\mu} K_\mu^{-2}(\kappa R) \,
A(\kappa)
\end{equation}
with
$$
A(\kappa) 
:= \int_R^\infty \Big( \kappa^2 \big( K_\mu'(\kappa r)\big)^2+ \frac{\mu^2}{r^2}K_\mu^2(\kappa r)\Big) r\,\rmd r \,.
$$
From \cite[5.54.2, 8.486.13]{GR} we get
\begin{equation*}\label{I_lambda}
\begin{split}
A(\kappa) 
&= \int_R^\infty\big(\kappa^2 rK_{\mu+ 1}^2(\kappa r)- 2\kappa\mu K_{\mu+ 1}(\kappa r)K_\mu(\kappa r)+ 2\frac{\mu^2}{r}K_\mu^2(\kappa r)\big)\rmd r\\ 
&= \int_R^\infty \kappa rK_{\mu+ 1}^2(\kappa r)\rmd(\kappa r)+ 2\mu\int_R^\infty K_\mu(\kappa r)\rmd K_\mu(\kappa r)\\&= \frac{\kappa^2 R^2}2\big(K_\mu(\kappa R)K_{\mu+ 2}(\kappa R)- K_{\mu+ 1}^2(\kappa R)\big)- \mu K_\mu^2(\kappa R) \,,
\end{split}
\end{equation*}
and \eqref{numerator asymptotic} follows from the asymptotics of the modified Bessel functions for small arguments \cite[8.485, 8.445, 8.446]{GR}. To prove \eqref{denominator asymptotic} we write
\begin{equation*}
\|\varphi_\kappa\|^2= \int_0^R |\psi_0|^2r\rmd r + R^{-2\mu} K_\mu^{-2}(\kappa R)\, J(\kappa)
\end{equation*}
where 
\begin{align*}
J(\kappa)&:= \int_R^\infty K_\mu^2(\kappa r) r\,\rmd r= \frac{R^2}{2}\big(K_{1- \mu}(\kappa R)K_{1+ \mu}(\kappa R)- K_\mu^2(\kappa R)\big) \,.
\end{align*}
In the last identity we used again \cite[5.54.2]{GR}, and asymptotics \eqref{denominator asymptotic} follow as before from the asymptotics of the modified Bessel functions.

Relation \eqref{numerator asymptotic} shows that for any $\alpha$ we can choose a sufficiently small $\kappa$ to make the quotient $(t[\varphi_\kappa]-\alpha v)/ \|\varphi_\kappa\|^2$ negative. To obtain an upper bound on $\lambda_\alpha$ we minimize this quotient to leading order in $\alpha$. Namely, we choose
\begin{align*}
 \kappa^2 = 
\begin{cases}
C\exp\left(-2/(\alpha v) \right)\,, & \mu=0 \,, \\
 \Big(\dfrac{\Gamma(\mu) \alpha v}{2^{1- 2\mu}\Gamma(1- \mu)}\Big)^\frac1\mu \,, & \mu\in (0,1)\,, \\
\dfrac{2\alpha v}{- \ln \alpha }\,, & \mu=1 \,,
\end{cases}
\end{align*}
where $C$ is a sufficiently small constant in case $\mu=0$. The claimed upper bounds easily follow using \eqref{numerator asymptotic} and \eqref{denominator asymptotic}.
\end{proof}

\begin{remark}
The definition of $\varphi_\kappa$ also makes sense for $\mu>1$. For later reference we note that in this case \eqref{numerator asymptotic} and \eqref{denominator asymptotic} take the form
\begin{equation}\label{numerator asymptotic2}
t[\varphi_{\kappa}]= 
\begin{cases}
O\left(\kappa^{2 \min\{\mu, 2\}} \right)\,, & \mu\neq 2\,,\\ 
O\left(\kappa^4 |\ln\kappa| \right)\,, & \mu= 2\,,
\end{cases}
\end{equation}
and
\begin{equation}\label{denominator asymptotic2}
\|\varphi_\kappa\|^2 
= \|\psi_0\|^2 + 
\begin{cases}
O\left(\kappa^{2\min\{\mu- 1, 1\}} \right)\,, & \mu\neq 2\,,\\ 
O\left(\kappa^2 |\ln\kappa| \right)\,, & \mu= 2\,.
\end{cases}
\end{equation}
This is proved similarly as in the case $0\leqslant\mu\leqslant 1$.
\end{remark}


\subsection{The lower bound}\label{sec:lower}

Our goal in this section is to prove that (ii) in Proposition \ref{vl} implies the existence of a function $\psi_0$ as in \eqref{psi_0 asymptotic T} and to prove the lower bound stated in Theorem \ref{main1d}.

Throughout we assume that $T$ is non-negative and we fix $V\in \mathcal V$ such that the negative spectrum of $T-\alpha V$ is non-empty for any $\alpha>0$. By Weyl's theorem, $\lambda_\alpha := \inf\spec (T-\alpha V)<0$ is an eigenvalue and we normalize the corresponding eigenfunction $\psi_\alpha$ by
\begin{equation}\label{eq:psinorm}
\psi_\alpha(R)= R^{-\mu} \,.
\end{equation}
Here $R> 0$ is chosen such that $W(r)= \mu^2r^{-2}$ and $V(r)=0$ for $r\geqslant R/2$.


\subsubsection{Existence of a virtual ground state}

We prove that $\psi_\alpha$, normalized by \eqref{eq:psinorm}, have a limit as $\alpha\to +0$ and that this limit is a weak solution of the equation $T\psi=0$. More precisely, we have

\begin{lemma}\label{vgs}
 The functions $\psi_\alpha$ converge pointwise and in $L_{2,\loc}([0,\infty),r\rmd r)$ to a function $\psi_0$ satisfying the properties stated in \eqref{psi_0 asymptotic T}. Moreover,
\begin{equation}\label{eq:conv}
0 \leqslant \int_0^R \left(|(\psi_\alpha-\psi_0)'|^2 + W|\psi_\alpha-\psi_0|^2\right)\,r\rmd r \leqslant \const \alpha^2
\end{equation}
for all sufficiently small $\alpha>0$, and
\begin{align}\label{norm on compact}
\int_0^R|\psi_\alpha|^2\, r\rmd r & = \int_0^R|\psi_0|^2\, r\rmd r + O(\alpha)\,, \\
\label{U integral}
\int_0^R V |\psi_\alpha|^2\, r\rmd r & = \int_0^R V |\psi_0\big|^2\, r\rmd r + O(\alpha) \,.
\end{align}
\end{lemma}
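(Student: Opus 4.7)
The plan is to analyze the eigenfunctions $\psi_\alpha$ in two regions, using the explicit modified Bessel structure for $r\geqslant R/2$ and variational compactness on $[0,R/2]$.

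In the outer region the equation $(T-\alpha V)\psi_\alpha=\lambda_\alpha\psi_\alpha$ reduces to the modified Bessel equation $-r^{-1}(r\psi_\alpha')'+\mu^2 r^{-2}\psi_\alpha=\lambda_\alpha\psi_\alpha$. The unique $L_2(r\,\rmd r)$-decaying solution, normalized by \eqref{eq:psinorm}, is
\[
\psi_\alpha(r) = \frac{R^{-\mu}}{K_\mu(\kappa_\alpha R)}\,K_\mu(\kappa_\alpha r),\qquad r\geqslant R/2,
\]
with $\kappa_\alpha:=\sqrt{-\lambda_\alpha}$. Form-boundedness of $V$ with relative $T$-bound zero yields $\lambda_\alpha\geqslant -\const\alpha$, hence $\kappa_\alpha\to 0$. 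The small-argument expansions of $K_\mu$ then give $\psi_\alpha(r)\to r^{-\mu}$ uniformly on compact subsets of $[R/2,\infty)$, producing the outer portion of the sought function $\psi_0$.

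For the inner region I would test the eigenvalue equation against $\psi_\alpha$ on $[0,R/2]$ to obtain a uniform bound on $\int_0^{R/2}(|\psi_\alpha'|^2 + W|\psi_\alpha|^2)\,r\,\rmd r$: the boundary term at $R/2$ is controlled by the explicit Bessel formula, while the terms involving $V$ and $\lambda_\alpha$ are absorbed using form-compactness of $V$ and $|\lambda_\alpha|=O(\alpha)$. Compactness of the embedding of the form domain into $L_2([0,R/2],r\,\rmd r)$ (radial Rellich--Kondrachov in two dimensions) extracts a subsequential limit $\psi_0$, and passing to the limit in the weak formulation against compactly supported test functions identifies $T\psi_0=0$ weakly. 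Since the positive ground states $\psi_\alpha$ converge to $\psi_0$ and $\psi_0(R)=R^{-\mu}>0$, $\psi_0$ is a nontrivial non-negative weak solution; uniqueness of such a solution (from ODE theory plus the boundary behavior at $0$ encoded in $\dom_\loc t$) gives both strict positivity and convergence of the full family.

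For the quantitative estimate \eqref{eq:conv} I would use that $\phi_\alpha:=\psi_\alpha-\psi_0$ vanishes at $R$, so its extension by zero to $(R,\infty)$ lies in $\dom t$ and is compactly supported. The weak identity applied to this extension yields
\[
D_\alpha^2 = \lambda_\alpha\!\int_0^R\!\psi_\alpha(\psi_\alpha-\psi_0)\,r\,\rmd r + \alpha\!\int_0^R\! V\psi_\alpha(\psi_\alpha-\psi_0)\,r\,\rmd r,
\]
where $D_\alpha^2$ denotes the left-hand side of \eqref{eq:conv}. Cauchy--Schwarz together with form-compactness of $V$ applied to $\phi_\alpha$ gives $\int V\phi_\alpha^2\leqslant \varepsilon D_\alpha^2+C_\varepsilon\|\phi_\alpha\|^2$, and the Dirichlet-type Poincar\'e inequality (owing to $\phi_\alpha(R)=0$) yields $\|\phi_\alpha\|_{L_2([0,R],r\,\rmd r)}\leqslant\const\, D_\alpha$ for small $\alpha$. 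With $|\lambda_\alpha|\leqslant\const\,\alpha$ this produces $D_\alpha^2\leqslant \const\,\alpha\, D_\alpha$, hence $D_\alpha=O(\alpha)$ and thus \eqref{eq:conv}. The estimates \eqref{norm on compact} and \eqref{U integral} then follow from the factorization $\psi_\alpha^2-\psi_0^2=(\psi_\alpha-\psi_0)(\psi_\alpha+\psi_0)$, Cauchy--Schwarz, and the bound on $D_\alpha$ just obtained.

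The principal obstacle I anticipate is the qualitative identification of $\psi_0$: showing that the limit is a genuine weak solution of $T\psi_0=0$ with the correct boundary behavior at the origin, using only form-theoretic hypotheses on $W$ (which may be strongly singular) and $V$. Once that step is secured, the integration-by-parts identity delivers the quantitative bounds essentially automatically.
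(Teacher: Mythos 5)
Your route is viable and lands close to the paper's, but the two proofs are organized differently and you gloss over the one point that is actually the crux. The paper does not extract $\psi_0$ by compactness: it first proves that the Dirichlet operator $T_R$ on $(0,R)$ is \emph{positive definite} (it is non-negative by Assumption (c), and a zero Dirichlet eigenvalue is excluded because the corresponding eigenfunction, extended by zero, would minimize $t$ and hence satisfy $\tilde\psi_0'(R)=0$, forcing $\tilde\psi_0\equiv 0$ by uniqueness for the Cauchy problem). It then \emph{constructs} $\psi_0:=F-T_R^{-1}f$ explicitly and shows $\psi_\alpha-\psi_0=T_R^{-1/2}\bigl(\alpha(T_R^{-1/2}VT_R^{-1/2})T_R^{1/2}\chi\psi_\alpha+\lambda_\alpha T_R^{1/2}\psi_\alpha\bigr)$, from which \eqref{eq:conv} follows by a short bootstrap using boundedness of $T_R^{-1/2}VT_R^{-1/2}$. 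Your compactness extraction plus identification of the limit as a weak solution, and your subsequent weak identity tested against $\psi_\alpha-\psi_0$ extended by zero, achieve the same ends and are acceptable; the outer Bessel analysis and the upgrade from subsequential to full convergence via ODE uniqueness match the paper's third step.

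The genuine gap is your ``Dirichlet-type Poincar\'e inequality.'' The quantity $D_\alpha^2$ is not the Dirichlet integral of $\phi_\alpha$ but $\int_0^R\bigl(|\phi_\alpha'|^2+W|\phi_\alpha|^2\bigr)r\,\rmd r$, and $W$ is only assumed form-bounded with relative bound $a<1$ and may be strongly negative near the origin; vanishing of $\phi_\alpha$ at $R$ therefore does \emph{not} by itself give $\|\phi_\alpha\|_{L_2}\leqslant\const D_\alpha$, nor even $D_\alpha^2\geqslant 0$ (which is part of the assertion \eqref{eq:conv}). What you need is exactly the positive definiteness of $T_R$, i.e.\ $t_R[\phi]\geqslant c\|\phi\|_R^2$ with $c>0$, and this requires the Cauchy-uniqueness argument sketched above (Assumption (c) alone only yields $T_R\geqslant 0$). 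The same coercivity is also silently used in your inner-region a priori bound, where $\int_0^{R/2}(|\psi_\alpha'|^2+W|\psi_\alpha|^2)r\,\rmd r$ does not control $\|\psi_\alpha\|_{L_2}$ without it. Once you supply this lemma, the rest of your argument closes.
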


For the proof of this lemma we need a rough a priori lower bound on the lowest eigenvalue, which we single out as

\begin{lemma}\label{linear}
For all sufficiently small $\alpha>0$ one has
\begin{equation}\label{eq:linear}
|\lambda_\alpha| \leqslant \const \alpha\,.
\end{equation}
\end{lemma}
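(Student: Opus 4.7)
The plan is to exploit the $T$-form boundedness of $V$ that is automatically built into the assumption $V\in\mathcal V$, together with the non-negativity of $T$ from Assumption \ref{ass:w}(c). No delicate spectral analysis is needed at this stage; the qualitative information already encoded in $V\in\mathcal V$ is enough to yield a linear-in-$\alpha$ lower bound for $\lambda_\alpha$ via the variational principle.

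First I would observe that since $V\in\mathcal V$, the operator $K:=(T+1)^{-1/2}V(T+1)^{-1/2}$ is compact, hence bounded on $L_2(\R_+,r\rmd r)$, so that $C:=\|K\|<\infty$. Because $(T+1)^{1/2}$ maps $\dom t$ bijectively onto $L_2(\R_+,r\rmd r)$ (sending the graph norm of $t$ to the standard norm), the operator bound on $K$ translates directly into the form estimate
$$
\Big|\int_0^\infty V|\psi|^2\,r\rmd r\Big| \leqslant C\big(t[\psi]+\|\psi\|^2\big), \qquad \psi\in\dom t.
$$

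Combining this with Assumption \ref{ass:w}(c), which gives $t[\psi]\geqslant 0$, I would obtain for every $\alpha\in(0,1/C)$
$$
(t-\alpha V)[\psi] \geqslant (1-\alpha C)\,t[\psi]-\alpha C\|\psi\|^2 \geqslant -\alpha C\|\psi\|^2,
$$
so that the variational principle yields $\lambda_\alpha\geqslant -\alpha C$. Together with $\lambda_\alpha<0$ this gives $|\lambda_\alpha|\leqslant C\alpha$ for all $\alpha<1/C$, proving the lemma with $\mathrm{const}=C$. I do not anticipate any real obstacle here: the argument is essentially bookkeeping, and the only subtlety worth flagging is that one does not need infinitesimal relative form-boundedness, since the coefficient $1-\alpha C$ in front of $t[\psi]$ is already non-negative as soon as $\alpha<1/C$.
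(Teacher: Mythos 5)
Your argument is correct: the boundedness of $K=(T+1)^{-1/2}V(T+1)^{-1/2}$, which is indeed guaranteed by $V\in\mathcal V$, translates via $\phi=(T+1)^{1/2}\psi$ into the form bound $\bigl|\int_0^\infty V|\psi|^2\,r\,\rmd r\bigr|\leqslant C\bigl(t[\psi]+\|\psi\|^2\bigr)$ on all of $\dom t$, and combined with $t[\psi]\geqslant 0$ this gives $\lambda_\alpha\geqslant -C\alpha$ for $\alpha<1/C$. The paper proceeds differently: it first notes $\lambda_\alpha\to 0$ (using the same kind of boundedness of $(T+\epsilon)^{-1/2}V(T+\epsilon)^{-1/2}$), and then exploits that $\lambda_\alpha$, being an infimum of functions affine in $\alpha$, is concave, so that $\lambda_\alpha\geqslant(1-\alpha)\lambda_0+\alpha\lambda_1\geqslant\alpha\lambda_1$ for $0<\alpha\leqslant 1$. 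The concavity route is slightly softer — it needs only that $\lambda_1>-\infty$ and $\lambda_0\geqslant 0$, not a quantitative relative bound — and it produces the (in general sharper) constant $|\lambda_1|$; your route is more self-contained and makes the constant explicit as $\|K\|$, at the cost of restricting to $\alpha<1/C$, which is harmless since the lemma is only claimed for small $\alpha$. Both are complete proofs of the statement.
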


\begin{proof}{}
First note that $\lambda_\alpha\to 0$ as $\alpha\to 0$. Indeed, this follows from the fact that the operator $(T+\epsilon)^{-1/2} V (T+\epsilon)^{-1/2}$ is bounded for any $\epsilon>0$, which in turn follows from the form boundedness of $V$. Now by the variational principle, $\lambda_\alpha$ is the infimum over linear functions of $\alpha$, and hence $\lambda_\alpha$ is a concave function of $\alpha$. Hence $\lambda_\alpha \geqslant \alpha \lambda_1$ for $0<\alpha\leqslant 1$, proving \eqref{eq:linear}.
\end{proof}

\begin{proof}[Proof of Lemma \ref{vgs}]
 \emph{First step. Definition of $\psi_0$.}
Let $T_R$ be the self--adjoint operator in $L_2(0,R,r\rmd r)$ associated with the closure of the quadratic form
$$
t_R[\psi] := \int_0^R \left(|\psi'|^2 + W|\psi|^2\right)\,r\,\rmd r\,,
\quad \psi\in C_0^\infty(0,R) \,.
$$
Using \eqref{eq:formbdd} one easily shows that $T_R$ has discrete spectrum. By \eqref{eq:nonneg} $T_R$ is non-negative. We claim that $T_R$ is actually positive definite. Indeed, otherwise $T_R$ would have a zero eigenvalue with corresponding eigenfunction $\tilde\psi_0$. The extension of $\tilde\psi_0$ by zero belongs to the form domain $\dom t$ and according to \eqref{eq:nonneg} minimizes the quadratic form $t$. Hence it belongs even to the operator domain $\dom T$. But this would imply that $\tilde\psi_0'(R)=0$ and therefore by the unique solvability of the Cauchy problem $\tilde\psi_0\equiv 0$, a contradiction.

Fix a function $F\in\dom T$ with $F(R)=R^{-\mu}$ and denote $f:=TF$. We define on $(0, R)$
$$
\psi_0 := F - T_R^{-1} f \,.
$$
(Strictly speaking, $T_R^{-1}$ is applied to the restriction of $f$ to the interval $(0,R)$.) Note that $\psi_0$ satisfies
\begin{equation}\label{psi_0 equation}
- r^{-1}(r \psi_0')' + W \psi_0 = 0
\qquad \text{in } (0,R)\,,
\qquad \psi_0(R)=R^{-\mu}\,,
\end{equation}
and belongs to $\dom_\loc t_R$ (which is defined similarly to $\dom_\loc t$). Moreover we claim that,
\begin{equation}\label{eq:resenergy}
\int_0^R \left(|\psi_0'|^2 +W|\psi_0|^2\right)\,r\rmd r
\qquad\text{is finite}.
\end{equation}
Indeed,
\begin{align*}
& \int_0^R \left(|\psi_0'|^2 +W|\psi_0|^2\right)\,r\rmd r \\
& \quad = t_R[\psi_0-F] - \int_0^R \left(|F'|^2 + W|F|^2\right) r\,\rmd r  
+ 2\re \int_0^R \left(\overline{F'}\psi_0' + W\overline F \psi_0 \right) r\,\rmd r \\
& \quad = t_R[\psi_0-F] - \int_0^R \overline f F r\,\rmd r - R \overline{F'(R)}F(R) 
+ 2\re \int_0^R \overline{f}\psi_0 r\,\rmd r \\
& \quad\quad\quad + 2R \re \overline{F'(R)}\psi_0(R) \,,
\end{align*}
where all the terms on the right hand side are finite. (Indeed, one easily checks that $F'(R)$ is well defined for $F\in\dom T$.) Note that when integrating by parts no boundary terms at zero appear since $F$ and $\psi_0$ belong to $\dom_\loc t_R$.

\emph{Second step. Proof of \eqref{eq:conv}.}
Let $0\leqslant\chi\leqslant 1$ be a smooth function on $[0,R]$ such that $\chi\equiv 1$ on $\big[0,R/2\big]$ and $\chi\equiv 0$ on $\big[3R/4, R\big]$. We first claim that
\begin{equation}
\label{eq:phialpha}
\psi_\alpha-\psi_0= \varphi_\alpha := T_R^{-1/2}\big(\alpha(T_R^{-1/2} V T_R^{-1/2}) T_R^{1/2}\chi\psi_\alpha + \lambda_\alpha  T_R^{1/2}\psi_\alpha\big).
\end{equation}
Note that this identity shows that the definition of $\psi_0$ is independent of the choice of $F$. On the other hand, the above formula is also independent of the choice of $\chi$.

Since $T_R^{-1/2} V T_R^{-1/2}$ is a bounded operator and $\chi\psi_\alpha$ belongs to the form domain of $T_R$, the function $\varphi_\alpha$ is well-defined and belongs to the form domain of $T_R$. For any $\phi\in\dom t_R$ one has (with $\tilde\phi$ denoting its extension by zero)
\begin{align*}
 t_R[\phi,\phi_\alpha] & = \int_0^R (\alpha V +\lambda_\alpha) \overline\phi \psi_\alpha r\,\rmd r
= t[\tilde\phi,\psi_\alpha] = t_R[\phi,\psi_\alpha-F] + (\tilde\phi,f) \\ 
& = t_R[\phi,\psi_\alpha-F] + t_R[\phi,F-\psi_0]
= t_R[\phi,\psi_\alpha-\psi_0] \,.
\end{align*}
Since $T_R$ is positive definite this implies $\varphi_\alpha = \psi_\alpha-\psi_0$, proving \eqref{eq:phialpha}.

We denote by $\|\cdot\|_R$ the norm in $L_2(0,R,rdr)$. It follows from \eqref{eq:phialpha}, \eqref{eq:linear}, the form-boundedness of $V$ and the positive definiteness of $T_R$ that
\begin{equation}\label{eq:conv1}
 t_R[\psi_\alpha-\psi_0]^{1/2} = t_R[\varphi_\alpha]^{1/2} \leqslant \const \alpha \big( \|\psi_\alpha\|_R + t_R[\chi\psi_\alpha]^{1/2} \big) \,.
\end{equation}
To estimate the first term on the right hand side of \eqref{eq:conv1} we use that $T_R$ is positive definite and hence
\begin{equation}\label{eq:conv2}
\|\psi_\alpha\|_R \leqslant  \|\psi_0\|_R + \|\varphi_\alpha\|_R 
\leqslant \const \big( 1+ t_R[\varphi_\alpha]^{1/2} \big) \,.
\end{equation}
To estimate the second term we use that according to \eqref{eq:resenergy} (recall that $W(r)=\mu^2 r^{-2}$ if $\chi(r)<1$)
\begin{align}\label{eq:conv3}
t_R[\chi\psi_\alpha] 
& \leqslant 2 \big( t_R[\chi\psi_0] +  t_R[\chi\varphi_\alpha] \big) \notag \\
& \leqslant \const \Big( \int_0^R \left( |\psi_0'|^2 + W|\psi_0|^2\right)\,r\rmd r + \|\psi_0\|_R^2 + t_R[\varphi_\alpha] + \|\varphi_\alpha\|_R^2\Big) \notag \\
& \leqslant \const \big( 1 + t_R[\varphi_\alpha] \big) \,.
\end{align}
Combining \eqref{eq:conv1}, \eqref{eq:conv2} and \eqref{eq:conv3} we obtain \eqref{eq:conv}.

Note that again by positive definiteness, \eqref{eq:conv} implies that
\begin{equation}\label{eq:convnorm}
\int_0^R |\psi_\alpha-\psi_0|^2\,r\rmd r \leqslant \const\alpha^2\,,
\end{equation}
which implies \eqref{norm on compact}. Relation \eqref{U integral} follows from \eqref{eq:conv} by writing
$$
\int_0^R V |\psi_\alpha|^2\, r\rmd r - \int_0^R V |\psi_0\big|^2\, r\rmd r 
= \int_0^R V |\varphi_\alpha|^2\, r\rmd r + 2 \int_0^R V \varphi_\alpha \chi\psi_0 \, r\rmd r
$$
and using that $ T_R^{-1/2} V T_R^{-1/2}$ is bounded and that $\chi\psi_0$ belongs to the form domain of $T_R$.

\emph{Third step. Pointwise convergence.}
By explicit solution,
$$
\psi_\alpha(r)= \frac{K_{\mu}\big(\sqrt{|\lambda_\alpha|}\,r\big)}{R^\mu\ K_{\mu}\big(\sqrt{|\lambda_\alpha|}\,R\big)}\,,
\qquad r\in [R/2, \infty) \,.
$$
Recall that $\psi_0$ satisfies \eqref{psi_0 equation} and hence belongs to the two-dimensional space 
of solutions of $-\psi_0''+\mu^2 r^{-2}\psi_0=0$ on $[R/2,R]$. From convergence \eqref{eq:convnorm} and the asymptotics of the Bessel functions we conclude that $\psi_0(r)= r^{-\mu}$ on $\big[R/2, R\big]$ and we can extend $\psi_0$ to $(R,\infty)$.

Recalling \eqref{psi_0 equation} and \eqref{eq:linear} we see that the coefficients in the differential equation satisfied by $\psi_\alpha$ converge in $L_{1,\loc}$ as $\alpha\to0$ to those of the equation satisfied by $\psi_0$. Moreover, for any fixed $r\geqslant R/2$ one has $\psi_\alpha(r)\to\psi_0(r)$ and $\psi_\alpha'(r)\to\psi_0'(r)$. By standard ODE results (see, e.g., \cite[Thm. 2.1]{Wm}) this implies that $\psi_\alpha$ converge pointwise to $\psi_0$ on $(0,r)$.

Finally, $\psi_\alpha$ are the eigenfunctions corresponding to the lowest eigenvalue and therefore non-negative. Hence their pointwise limit $\psi_0$ is so as well. By standard ODE results (an elementary Harnack's inequality), $\psi_0$ is positive on $(0,\infty)$.
\end{proof}

\begin{lemma}\label{simple}
For all sufficiently small $\alpha>0$ the operator $T-\alpha V$ has only one eigenvalue.
\end{lemma}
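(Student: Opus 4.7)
The plan is to argue by contradiction. Suppose along some sequence $\alpha_n\to 0+$ the operator $T-\alpha_n V$ admits at least two negative eigenvalues $\lambda_n<\lambda_n'<0$, with corresponding real eigenfunctions $\psi_n$ (the positive ground state) and $\tilde\psi_n$. I will show this is incompatible with the orthogonality $\int_0^\infty\psi_n\tilde\psi_n\, r\,\rmd r=0$.

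First I would fix normalizations at the matching radius $R$. Because $V\equiv 0$ and $W(r)=\mu^2 r^{-2}$ on $[R/2,\infty)$, the function $\tilde\psi_n$ must equal, on $[R,\infty)$, a scalar multiple of the positive modified Bessel function $K_\mu(\sqrt{|\lambda_n'|}\,r)$; in particular $\tilde\psi_n(R)\ne 0$. Choosing the sign of $\tilde\psi_n$ appropriately and rescaling, I may assume $\tilde\psi_n(R)=R^{-\mu}$, whence $\tilde\psi_n>0$ on $[R,\infty)$. I impose the same normalization on $\psi_n$, which is positive throughout.

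Next I would apply the analysis of Lemma \ref{vgs} to $\tilde\psi_n$ in place of $\psi_n$. A careful re-reading of that proof reveals that it uses only (i) the eigenvalue equation, (ii) the normalization $\psi(R)=R^{-\mu}$, and (iii) the a priori bound $|\lambda|\leqslant\const\alpha$ of Lemma \ref{linear}; positivity of the ground state is invoked only at the very end to deduce positivity of $\psi_0$, not during the $L_2$-approximation scheme that produces $\psi_0$. Since $|\lambda_n'|<|\lambda_n|\leqslant\const\alpha_n$, property (iii) is inherited automatically, and the same derivation yields $\tilde\psi_n\to\psi_0$ in $L_2(0,R,r\,\rmd r)$ as $n\to\infty$.

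Orthogonality of eigenfunctions for distinct eigenvalues then gives
$$
\int_0^R\psi_n\tilde\psi_n\, r\,\rmd r \;=\; -\int_R^\infty\psi_n\tilde\psi_n\, r\,\rmd r.
$$
By continuity of the inner product on $L_2(0,R,r\,\rmd r)$, the left-hand side converges to $\int_0^R\psi_0^2\,r\,\rmd r$, which is strictly positive since $\psi_0$ is continuous and positive on $(0,R]$. The right-hand side, however, is strictly negative for every $n$ because $\psi_n$ and $\tilde\psi_n$ are both positive on $[R,\infty)$. This contradiction proves the lemma. The only delicate point is verifying that Lemma \ref{vgs} transfers to a non-ground-state eigenfunction; once one confirms that positivity of $\psi_\alpha$ enters only a posteriori and that the eigenvalue bound from Lemma \ref{linear} passes to higher eigenvalues by monotonicity, the remaining positivity/orthogonality obstruction is immediate.
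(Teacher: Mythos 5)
Your argument is correct, but it takes a genuinely different (and considerably heavier) route than the paper. The paper's proof is two lines: imposing a Dirichlet boundary condition at $R$ changes the resolvent by a rank-one operator, so $T-\alpha V$ has at most one more negative eigenvalue than the decoupled operator; the latter is non-negative because on $(R,\infty)$ one has $V=0$ and $-r^{-1}(r\,\cdot\,')'+\mu^2r^{-2}\geqslant 0$, while on $(0,R)$ it coincides with $T_R-\alpha V$, which is non-negative for small $\alpha$ since $T_R$ is positive definite and $T_R^{-1/2}VT_R^{-1/2}$ is bounded. Your route instead re-runs the convergence analysis of Lemma \ref{vgs} on a putative second eigenfunction and contradicts orthogonality; your key observation --- that the $L_2(0,R)$-approximation scheme there uses only the weak eigenvalue equation, the normalization at $R$, and the bound $|\lambda|\leqslant\const\alpha$ (inherited by higher eigenvalues by monotonicity), and that positivity enters only a posteriori --- is accurate, and the identification of $\tilde\psi_n$ with a positive multiple of $K_\mu(\sqrt{|\lambda_n'|}\,r)$ on $[R,\infty)$ does pin down the sign of $\int_R^\infty\psi_n\tilde\psi_n\,r\,\rmd r$. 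Two small points you should make explicit: (i) ``only one eigenvalue'' should be read with multiplicity, so you must also exclude a degenerate lowest eigenvalue; this follows from your own first step, since two eigenfunctions for the same eigenvalue are both multiples of $K_\mu$ on $[R,\infty)$, whence a suitable linear combination vanishes there and then vanishes identically by uniqueness of the Cauchy problem; (ii) $\int_0^R\psi_0^2\,r\,\rmd r>0$, e.g.\ because $\psi_0(r)=r^{-\mu}$ on $[R/2,R]$. In the end your approach buys nothing extra for this lemma beyond showing that Lemma \ref{vgs} is not tied to the ground state; the Dirichlet-decoupling argument is the more economical choice.
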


\begin{proof}{}
Imposing a Dirichlet boundary condition is a rank one perturbation of the resolvent and can create at most one negative eigenvalue. Hence it suffices to prove that the operator $T-\alpha V$ with an additional Dirichlet boundary condition at $R$ is non-negative. This is obvious for the part on $(R,\infty)$. The part on $(0,R)$ coincides with the operator $T_R-\alpha V$ from the previous proof. Since $T_R$ is positive definite and $T_R^{-1/2} V T_R^{-1/2}$ is bounded, we obtain the claim.
\end{proof}


\subsubsection{The lower bound}

In the proof of the upper bound we have used identity \eqref{eq:minlocal} for the virtual ground state $\psi_0$. For the proof of the lower bound we need a corresponding inequality for all $\psi$. This is the content of

\begin{lemma}\label{energycomp}
For any $\psi\in\dom t$
$$
\int_0^R \big(|\psi'|^2 + W|\psi|^2\big)\,r\,\rmd r \geqslant -\mu \big|\psi(R)\big|^2 .
$$
\end{lemma}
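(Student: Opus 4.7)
The approach will be the standard \emph{ground state representation} (also called the Jacobi or Agmon substitution): since Lemma \ref{vgs} provides a positive weak solution $\psi_0$ of $T\psi_0=0$, we substitute $\psi=\psi_0\varphi$ into the quadratic form on $(0,R)$. The terms involving $|\psi_0'|^2+W|\psi_0|^2$ will be cancelled by integration by parts against the equation satisfied by $\psi_0$, leaving a manifestly non-negative integral plus a boundary term at $R$ that produces the claimed right-hand side.

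Concretely, I would first reduce by density to $\psi\in C_0^\infty(0,\infty)$ (this subspace is dense in $\dom t$ by definition of $t$, and both sides of the asserted inequality are continuous in the form norm once the 1D trace $\psi(R)$ is controlled by the local $H^1$ norm). For such $\psi$, the function $\varphi=\psi/\psi_0$ is smooth and compactly supported in $(0,\infty)$ since $\psi_0$ is continuous and strictly positive on $(0,\infty)$. A direct expansion gives
\begin{align*}
\int_0^R\!\big(|\psi'|^2 + W|\psi|^2\big)\,r\,\rmd r
&= \int_0^R\!\big(|\psi_0'|^2+W|\psi_0|^2\big)|\varphi|^2\,r\,\rmd r \\
&\quad + \int_0^R\!\psi_0\psi_0'\,(|\varphi|^2)'\,r\,\rmd r
+ \int_0^R\!\psi_0^2\,|\varphi'|^2\,r\,\rmd r.
\end{align*}
Integrating the middle term by parts and using $(r\psi_0')'=rW\psi_0$ (valid weakly by \eqref{psi_0 asymptotic T}), one gets $(r\psi_0\psi_0')'=r(|\psi_0'|^2+W|\psi_0|^2)$, which exactly cancels the first integral and produces the identity
$$
\int_0^R\!\big(|\psi'|^2 + W|\psi|^2\big)\,r\,\rmd r
= \big[r\psi_0\psi_0'|\varphi|^2\big]_0^R + \int_0^R\!\psi_0^2|\varphi'|^2\,r\,\rmd r.
$$
The boundary contribution at $0$ vanishes because $\varphi$ is compactly supported in $(0,\infty)$; at $r=R$, since $\psi_0(r)=r^{-\mu}$ for $r\geqslant R$ we have $\psi_0(R)=R^{-\mu}$ and $\psi_0'(R)=-\mu R^{-\mu-1}$, so
$$
\big[r\psi_0\psi_0'|\varphi|^2\big]_{r=R} = R\cdot R^{-\mu}\cdot(-\mu R^{-\mu-1})\cdot |\psi(R)|^2 R^{2\mu} = -\mu\,|\psi(R)|^2.
$$
Dropping the non-negative remaining integral gives the inequality, and density then transfers it to arbitrary $\psi\in\dom t$.

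The main technical obstacle I anticipate is justifying the integration by parts involving $\psi_0$: since $\psi_0$ is only a weak solution and $W$ is merely in $L_{1,\loc}$, the quantity $r\psi_0\psi_0'$ must be interpreted via the weak formulation. This can be handled by testing the weak equation $t[\phi,\psi_0]=0$ against $\phi=|\varphi|^2\psi_0$ (which lies in $\dom t$ with compact support in $[0,\infty)$ when $\psi\in C_0^\infty(0,\infty)$), thereby encoding the cancellation directly without pointwise differentiation of $\psi_0'$; the boundary contribution at $R$ emerges because $\phi$ need not vanish at $R$, and it can be evaluated using the explicit form $\psi_0(r)=r^{-\mu}$ on $[R,\infty)$, which matches $\psi_0$ from the left since $\psi_0\in\dom_\loc t$ forces continuity of $r\psi_0'$ across $R$. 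The density step is then routine since the right-hand side depends only on the (continuous) trace $\psi(R)$.
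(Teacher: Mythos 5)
Your argument is correct in substance, but it takes a genuinely different route from the paper's and uses strictly more input. The paper's proof never invokes the virtual ground state: after disposing of the case $\psi(R)=0$ via $T\geqslant 0$ and normalizing $\psi(R)=R^{-\mu}$ by homogeneity, it extends $\psi$ to $[R,\infty)$ by the regularized tail $\psi_\varepsilon(r)=r^{-\mu}\rme^{-\varepsilon(r-R)}$ (which lies in $\dom t$), applies the global non-negativity \eqref{eq:nonneg} to $\psi_\varepsilon$, and computes explicitly that $I_\varepsilon=\int_R^\infty\big(|\psi_\varepsilon'|^2+\mu^2r^{-2}|\psi_\varepsilon|^2\big)r\,\rmd r\to\mu R^{-2\mu}$ as $\varepsilon\to0$. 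That argument is shorter, requires only Assumption \ref{ass:w}, and sidesteps all regularity questions about weak solutions. Your ground state substitution is legitimate in context (Lemma \ref{vgs} does provide $\psi_0$ at this stage) and buys you more, namely the identity with the explicit non-negative remainder $\int_0^R\psi_0^2|\varphi'|^2\,r\,\rmd r$; but the technicalities you flag do need the following repairs. First, the natural test function $|\varphi|^2\psi_0\,\chi_{[0,R]}$ is \emph{not} in $\dom t$ (it jumps at $R$), so the weak formulation on $(0,\infty)$ does not directly produce your identity on $(0,R)$ with a boundary term; you should either test with $|\varphi|^2\psi_0$ on all of $(0,\infty)$ and then subtract the explicit contribution of $[R,\infty)$, where $\psi_0=r^{-\mu}$, or observe that $W\in L_{1,\loc}(0,\infty)$ forces $r\psi_0'$ to be locally absolutely continuous, so the integration by parts on $[\varepsilon,R]$ (with $\varepsilon$ below $\supp\psi$) is classical and the value $r\psi_0'(R)=-\mu R^{-\mu}$ is well defined from either side. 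Second, $\varphi=\psi/\psi_0$ is not smooth but only locally absolutely continuous with $\varphi'\in L_{2,\loc}$, which suffices for the expansion of $|\psi'|^2$ but should be stated. The density step is fine: since $a<1$ in \eqref{eq:formbdd}, form-norm convergence controls $\psi'$, $W_\pm^{1/2}\psi$ and the trace at $R$ separately, so both sides of the inequality pass to the limit.
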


Here we use $R$ as defined at the beginning of this section, but any $R$ with $W(r)=(\mu/r)^2$ for $r\geqslant R$ would do. Note also that the value $\psi(R)$ is well-defined by the embedding theorem.

\begin{proof}{}
If $\psi(R)=0$, the assertion follows since $T\geqslant 0$. Hence by homogeneity, we may assume that $\psi(R)= R^{-\mu}$. For $\varepsilon>0$ we define $\psi_\epsilon(r):=\psi(r)$ for $0\leqslant r\leqslant R$ and
\begin{equation*}
\psi_\varepsilon(r) := r^{-\mu} \rme^{-\varepsilon (r- R)}, \quad r\geqslant R.
\end{equation*}
Since $T$ is non-negative and $\psi_\epsilon\in\dom t$, we get
\begin{equation*}
\int_0^R \big(|\psi'|^2 + W|\psi|^2\big)\,r\,\rmd r
\geqslant - \int_R^\infty \big(|\psi_\varepsilon'|^2 + \mu^2
r^{-2} |\psi_\varepsilon|^2\big)\,r\,\rmd r =: - I_\varepsilon \,.
\end{equation*}
Elementary calculations show that $I_\varepsilon \to \mu R^{-2\mu}$ as $\varepsilon\to 0$, which proves the assertion.
\end{proof}

Now we are ready to complete the proof of Theorem \ref{main1d}.

\begin{proposition}\label{lower}
Let $V\in\mathcal V$ satisfy \eqref{U assumption T}. Then $\lambda_\alpha := \inf\spec (T-\alpha V)$ satisfies
$$
\lambda_\alpha \geqslant
\begin{cases}
-\exp\Big(- \dfrac2{\alpha v} \left(1- \const \alpha\right)\Big) \,, & \mu=0 \,,\\
- c_\mu \left(\alpha v\right)^\frac1\mu(1+ \const\alpha^{\min\{1, \frac1\mu- 1\}}) \,, & \mu\in(0,1) \,, \\
- \dfrac{2\alpha v}{|\ln\alpha|} \Big(1 + \const  \dfrac{\ln|\ln\alpha |}{|\ln\alpha |}\Big) \,, & \mu=1 \,, \\
- \dfrac{\alpha v}{\int_0^\infty\psi_0^2(r)r\rmd r} \bigg(1+ \const 
\begin{cases}\alpha^{\min\{1, \mu- 1\}},& \mu\neq 2\\ \alpha|\ln\alpha|\,,& \mu= 2\end{cases}
\bigg) \,, & \mu>1\,,
\end{cases}
$$
where $c_\mu$ is given by \eqref{c_nu}.
\end{proposition}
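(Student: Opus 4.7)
The plan is to work with the normalized eigenfunction $\psi_\alpha$, $\psi_\alpha(R)=R^{-\mu}$, from Section~2.2, and to exploit three inputs already in hand: the explicit Bessel form of $\psi_\alpha$ on $(R,\infty)$, the interior lower bound of Lemma~\ref{energycomp}, and the convergence estimates of Lemma~\ref{vgs} together with the rough a priori bound $|\lambda_\alpha|\leqslant \const\alpha$ from Lemma~\ref{linear}. On $(R,\infty)$, the eigenvalue equation reduces to $-r^{-1}(r\psi_\alpha')' + \mu^2 r^{-2}\psi_\alpha = \lambda_\alpha \psi_\alpha$, so the decaying solution is
\[
\psi_\alpha(r) = \frac{K_\mu(\kappa r)}{R^\mu K_\mu(\kappa R)}\,, \qquad \kappa:=\sqrt{|\lambda_\alpha|}\,.
\]

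The first step is to derive a sharp master inequality from these ingredients. Multiplying the equation by $\overline{\psi_\alpha}\,r$ and integrating on $(R,\infty)$ yields
\[
\int_R^\infty\!\bigl(|\psi_\alpha'|^2+W|\psi_\alpha|^2\bigr)r\,\rmd r = \lambda_\alpha\!\int_R^\infty\!|\psi_\alpha|^2\, r\,\rmd r \;-\; R\psi_\alpha(R)\psi_\alpha'(R)\,,
\]
and the identity $-zK_\mu'(z)/K_\mu(z) = zK_{\mu-1}(z)/K_\mu(z) + \mu$ turns the boundary term into the explicit expression $R^{-2\mu}\bigl(\kappa R\,K_{\mu-1}(\kappa R)/K_\mu(\kappa R)+\mu\bigr)$. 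Substituting into the form identity $t[\psi_\alpha]-\alpha\!\int\!V|\psi_\alpha|^2 r\,\rmd r=\lambda_\alpha\|\psi_\alpha\|^2$ and applying Lemma~\ref{energycomp} to the interior piece, the two $\pm\mu R^{-2\mu}$ contributions cancel exactly, leaving the master inequality
\[
F(\kappa) \,:=\, \kappa R^{1-2\mu}\,\frac{K_{\mu-1}(\kappa R)}{K_\mu(\kappa R)} \;\leqslant\; \alpha v_\alpha - |\lambda_\alpha|\, m_\alpha\,,
\]
where $v_\alpha=v+O(\alpha)$ and $m_\alpha=\int_0^R\psi_0^2 r\,\rmd r + O(\alpha)$ by \eqref{norm on compact}--\eqref{U integral}.

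The remainder is a case analysis driven by the small-argument asymptotics of modified Bessel functions. For $\mu\in(0,1)$, $F(\kappa)=2^{1-2\mu}(\Gamma(1-\mu)/\Gamma(\mu))\,\kappa^{2\mu}(1+O(\kappa^{2\min\{\mu,1-\mu\}}))$; since $|\lambda_\alpha|m_\alpha=O(\kappa^2)=o(\kappa^{2\mu})$, the inequality resolves to $\kappa^{2\mu}\leqslant c_\mu^{\mu}\alpha v\bigl(1+O(\alpha^{\min\{1,1/\mu-1\}})\bigr)$, yielding exactly the stated $c_\mu(\alpha v)^{1/\mu}$ bound. For $\mu>1$, $F(\kappa)=\kappa^2 R^{2-2\mu}/(2(\mu-1))+O(\kappa^{2\min\{\mu,2\}})$ (with a $\kappa^4|\ln\kappa|$ correction at $\mu=2$); combined with the algebraic identity $\int_R^\infty\psi_0^2 r\,\rmd r=R^{2-2\mu}/(2(\mu-1))$, the coefficient of $\kappa^2$ in the inequality becomes $m_\alpha+\int_R^\infty\psi_0^2 r\,\rmd r=\|\psi_0\|^2+O(\alpha)$, and one solves directly. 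For $\mu=0$, $F(\kappa)\sim1/|\ln\kappa|$; since $\kappa^2$ is exponentially smaller than $|\ln\kappa|^{-1}$, one reaches $|\ln\kappa|^{-1}(1+o(1))\leqslant\alpha v(1+O(\alpha))$, yielding the exponential bound via $\kappa^2=\rme^{-2|\ln\kappa|}$. For $\mu=1$, $F(\kappa)\sim\kappa^2|\ln\kappa|$; the rough estimate $\kappa^2\lesssim\alpha$ refines by bootstrap to $|\ln\kappa|=\tfrac12|\ln\alpha|(1+O(\ln|\ln\alpha|/|\ln\alpha|))$, giving the claimed $2\alpha v/|\ln\alpha|$ asymptotics.

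The main obstacles are the cases $\mu=0$ and $\mu=1$, where the rough linear bound of Lemma~\ref{linear} must be bootstrapped against the master inequality to extract the correct exponential or logarithmic dependence on $\alpha$, and the borderline case $\mu=2$, where logarithmic corrections appear in the Bessel expansion and must be tracked separately. The remaining cases are essentially bookkeeping once the master inequality and the standard expansions of $K_\mu$ are in hand.
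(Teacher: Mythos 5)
Your proposal is correct and follows essentially the same route as the paper: starting from $\lambda_\alpha\|\psi_\alpha\|^2=t[\psi_\alpha]-\alpha\int_0^\infty V|\psi_\alpha|^2r\,\rmd r$, using Lemma \ref{energycomp} on $(0,R)$, the convergence estimates of Lemma \ref{vgs} and the a priori bounds from the upper bound, one arrives at a single master inequality that is resolved case by case via small-argument Bessel asymptotics --- your $F(\kappa)\leqslant\alpha v_\alpha-|\lambda_\alpha|m_\alpha$ is algebraically identical to the paper's inequality $t[\varphi_{\kappa_\alpha}]-\lambda_\alpha\|\varphi_{\kappa_\alpha}\|^2\leqslant\alpha v(1+\const\alpha)$. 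The only cosmetic difference is that you recompute the exterior energy directly through the boundary term and the logarithmic derivative of $K_\mu$, whereas the paper reuses the asymptotics of $t[\varphi_\kappa]$ and $\|\varphi_\kappa\|^2$ already established in the proof of the upper bound.
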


\begin{proof}[Proof of Proposition \ref{lower}]
As before, let $\psi_\alpha$ be the eigenfunctions corresponding to $\lambda_\alpha$ normalized by \eqref{eq:psinorm} and let $\varphi_\kappa$ be the functions defined in \eqref{eq:trial}. Note that 
\begin{equation}\label{explicit solution}
\psi_\alpha(r)= \varphi_{\kappa_\alpha}(r), \quad r\geqslant R/2 \,,
\end{equation}
where $\lambda_\alpha = -\kappa_\alpha^2$. In order to find a lower bound on $\lambda_\alpha$ we write
\begin{equation}\label{lambda_alpha}
\lambda_\alpha \|\psi_\alpha\|^2 = t[\psi_\alpha]- \alpha\int_0^\infty V|\psi_\alpha|^2r\rmd r \,.
\end{equation}
Using Lemma \ref{energycomp}, \eqref{eq:minlocal} and \eqref{U integral} the right hand side can be estimated from below according to
\begin{equation}\label{numerator below}
t[\psi_\alpha]- \alpha\int_0^\infty V|\psi_\alpha|^2r\rmd r
\geqslant t[\varphi_{\kappa_\alpha}]- \alpha\int_0^\infty V\psi_0^2r\rmd r- \const\alpha^2 \,,
\end{equation}
and in order to estimate the left hand side we use \eqref{norm on compact} and obtain
\begin{equation}\label{denominator below}
\|\psi_\alpha\|^2\geqslant \|\varphi_{\kappa_\alpha}\|^2- \const\alpha.
\end{equation}
Plugging \eqref{numerator below} and \eqref{denominator below} into \eqref{lambda_alpha} and using the a-priori bound \eqref{eq:linear} yields
\begin{equation}\label{eq:prooflower}
t[\varphi_{\kappa_\alpha}] - \lambda_\alpha \|\varphi_{\kappa_\alpha} \|^2
\leqslant \alpha v \, (1 + \const \alpha ) \,.
\end{equation}
The assertion will now be an easy consequence of the behavior of $t[\varphi_{\kappa_\alpha}]$ and $\|\varphi_{\kappa_\alpha}\|^2$ which was established in the previous section.

Indeed, assume first that $0<\mu<1$. Using \eqref{numerator asymptotic}, \eqref{denominator asymptotic} and the fact that $\alpha\leqslant \const |\lambda_\alpha|^\mu$, which follows from Proposition \ref{upper}, we deduce from \eqref{eq:prooflower} that
$$
2^{1-2\mu} \frac{\Gamma(1-\mu)}{\Gamma(\mu)} |\lambda_\alpha|^\mu \big(1-\const |\lambda_\alpha|^{\min\{1-\mu,\mu\}}\big)
\leqslant \alpha v \,.
$$
Together with the a-priori fact that $\lambda_\alpha\to 0$, established in Lemma \ref{linear}, one easily obtains the assertion in the case $0<\mu<1$.

In the cases $\mu=0$ and $\mu\geqslant 1$ we proceed similarly and we only sketch the necessary changes. 
In order to remove the $\alpha^2$-term from the right hand side of \eqref{eq:prooflower} we use the rough bounds $\alpha \leqslant \const \big|\ln |\lambda_\alpha| \big|^{-1} $ if $\mu=0$, $\alpha \leqslant \const |\lambda_\alpha| \big|\ln |\lambda_\alpha|\big|$ if $\mu=1$ and $\alpha\leqslant \const |\lambda_\alpha|$ if $\mu>1$, which are deduced from Proposition \ref{upper}. Moreover, if $\mu=0$ we estimate $\|\varphi_{\kappa_\alpha} \|^2\geqslant0$ in \eqref{eq:prooflower} and if $\mu\geqslant 1$ we estimate $t[\varphi_{\kappa_\alpha}] \geqslant 0$. We use asymptotics \eqref{numerator asymptotic} for $\mu=0$, \eqref{denominator asymptotic}
for $\mu=1$ and \eqref{denominator asymptotic2} for $\mu>1$. Finally, in case $\mu=1$ to pass from the bound $\big|\lambda_\alpha \ln |\lambda_\alpha|\big| \leqslant 2\alpha v \big(1+\const |\ln\alpha|^{-1}\big)$ to the claimed lower bound we use the following Lemma \ref{invlog}.
\end{proof}

In the previous proof we used

\begin{lemma}\label{invlog}
Let $f(t)=-t\ln t$ for $0<t\leqslant \rme^{-1}$ and $f^{-1}$ the inverse function. Then $f^{-1}$ is increasing and
$$
f^{-1}(s) = \frac{-s}{\ln s} \left(1+ O\Big(\frac{\ln |\ln s|}{|\ln s|}\Big)\right)
\qquad\text{as} \ s\to 0+ \,.
$$
\end{lemma}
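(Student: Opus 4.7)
The plan is to invert the relation $s = -t \ln t$ by iteration. First I would verify the monotonicity claim: on $(0,e^{-1}]$ one has $f'(t)=-\ln t-1\geqslant 0$, with strict inequality except at the endpoint, so $f$ is strictly increasing from $0$ to $e^{-1}$ and the inverse $f^{-1}:(0,e^{-1}]\to(0,e^{-1}]$ exists, is increasing, and tends to $0$ as $s\to 0+$.

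Next, write $t:=f^{-1}(s)$, so that $s=-t\ln t$ and $t\in(0,e^{-1}]$. Taking logarithms of the identity $s=t\,|\ln t|$ (valid because $\ln t<0$) gives
\begin{equation*}
\ln s \;=\; \ln t + \ln|\ln t|,\qquad\text{equivalently}\qquad |\ln t|\;=\;|\ln s|-\ln|\ln t|.
\end{equation*}
Set $L:=|\ln s|$ and $\ell:=|\ln t|$. Since $t\to 0$ forces $\ell\to\infty$, the equation $\ell=L-\ln\ell$ (note $\ln\ell>0$, wait rather $\ell=L+\ln\ell$ once one observes that for $0<t<e^{-1}$ one has $|\ln t|>1$ so $\ln|\ln t|>0$ and the sign works out as $\ell=L+\ln\ell\cdot\text{sgn}$; I will keep track of the sign carefully when writing the proof). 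In either sign convention a single iteration shows $\ell/L\to 1$, hence $\ln\ell=\ln L+o(1)$, and a second iteration yields
\begin{equation*}
\ell \;=\; L\Bigl(1 + O\bigl(\tfrac{\ln L}{L}\bigr)\Bigr).
\end{equation*}

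Finally, from $t=-s/\ln t=s/\ell$ one obtains
\begin{equation*}
f^{-1}(s)\;=\;\frac{s}{\ell}\;=\;\frac{s}{L}\Bigl(1+O\bigl(\tfrac{\ln L}{L}\bigr)\Bigr)^{-1}\;=\;\frac{-s}{\ln s}\left(1+O\Bigl(\frac{\ln|\ln s|}{|\ln s|}\Bigr)\right),
\end{equation*}
which is the desired asymptotic. The whole argument is elementary; the only minor subtlety is keeping the signs straight (recall $\ln s<0$ and $\ln t<0$ for small $s,t$), so that when passing from the implicit equation to the two-term expansion of $|\ln t|$ the error term remains of the stated order. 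No part of this is a genuine obstacle.
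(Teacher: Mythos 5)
Your argument is sound and reaches the stated conclusion; the paper itself omits the proof of this lemma (``elementary and will be omitted''), so there is nothing to compare against. The one thing to clean up before it becomes a finished proof is the sign in the intermediate identity, which you flag but never actually resolve in the text. From $s = t\,|\ln t|$ and taking logarithms, $\ln s = \ln t + \ln|\ln t|$; since both $\ln s$ and $\ln t$ are negative for small $s,t$, this is $-|\ln s| = -|\ln t| + \ln|\ln t|$, i.e.\ $|\ln s| = |\ln t| - \ln|\ln t|$, so with $L=|\ln s|$, $\ell=|\ln t|$ the correct relation is $\ell = L + \ln\ell$, \emph{not} $\ell = L - \ln\ell$ as you first write. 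Fortunately, as you note, this does not affect the conclusion: either way $\ell\to\infty$, $\ln\ell = o(\ell)$, hence $\ell/L\to 1$, then $\ln\ell = \ln L + o(1)$ and $\ell = L\bigl(1+O(\ln L/L)\bigr)$, and finally $f^{-1}(s) = t = s/\ell = \frac{-s}{\ln s}\bigl(1+O(\ln|\ln s|/|\ln s|)\bigr)$ since the $O$ term is insensitive to the sign. A final version should simply state $\ell = L + \ln\ell$ outright and drop the ``wait rather\ldots I will keep track of the sign carefully'' parenthetical, which reads as unfinished bookkeeping rather than proof.
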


The proof of this lemma is elementary and will be omitted.


\subsection{Absence of negative eigenvalues}

In this subsection we briefly comment on the case where Assumption \ref{ass:w} is satisfied but the equivalent conditions in Proposition \ref{vl} fail. We will need this in the proof of Theorems \ref{main} and \ref{mainint}. Note that if the weak solution of $T\psi_0=0$ is different from $r^{-\mu}$ on $[R,\infty)$, then it has to increase like $\ln r$ if $\mu=0$ or like $r^\mu$ if $\mu>0$.

\begin{proposition}\label{hardy}
Assume that there exists a positive weak solution $\psi_0$ of the equation $T\psi_0=0$ such that
the limit $\lim_{r\to\infty} (\ln r)^{-1} \psi_0(r)$ if $\mu=0$ and $\lim_{r\to\infty} r^{-\mu} \psi_0(r)$ if $\mu>0$ exists and is non-zero. Then for all $\psi\in C_0^\infty(0,\infty)$ and for all non-negative measurable $V$ one has
$$
t[\psi] \geqslant \frac14 \Big( \sup_{0<r<\infty} \int_0^r V \psi_0^2 \rho\,\rmd\rho \int_r^\infty \psi_0^{-2} \rho^{-1}\,\rmd\rho \Big)^{-1}  \int_0^\infty V |\psi|^2 r\,\rmd r \,.
$$
\end{proposition}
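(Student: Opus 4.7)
The plan is to combine a ground state representation based on the positive solution $\psi_0$ with a classical one-weight Hardy inequality of Muckenhoupt type on the half-line. Neither ingredient uses the precise growth rate of $\psi_0$ at infinity in an essential way; the growth assumption enters only through ensuring that the constant $C$ in the statement can be finite.

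For the first ingredient, given $\psi \in C_0^\infty(0,\infty)$, I substitute $\phi := \psi/\psi_0$. Positivity of $\psi_0$ and the local regularity coming from $\psi_0 \in \dom_\loc t$ ensure that $\phi$ is absolutely continuous and compactly supported in $(0,\infty)$. The function $\eta := \phi^2 \psi_0$ therefore lies in $\dom t$ and has compact support in $[0,\infty)$, so the weak-solution identity $t[\eta,\psi_0] = 0$ applies and, after expanding $\eta'$ by Leibniz, reads
$$
2 \int_0^\infty \phi\phi'\psi_0\psi_0'\, r\,\rmd r \;+\; \int_0^\infty \phi^2\bigl((\psi_0')^2 + W\psi_0^2\bigr)\, r\,\rmd r \;=\; 0 .
$$
Expanding $t[\phi\psi_0] = \int_0^\infty ((\phi\psi_0)')^2 r\,\rmd r + \int_0^\infty W\phi^2\psi_0^2 r\,\rmd r$ directly and subtracting the identity above cancels the cross and zeroth-order terms, yielding the ground state representation
$$
t[\psi] \;=\; \int_0^\infty \psi_0^2 \, |\phi'|^2 \, r\,\rmd r .
$$

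With this identity, the desired inequality reduces to the one-weight bound
$$
\int_0^\infty V\psi_0^2 |\phi|^2 \, r\,\rmd r \;\leqslant\; 4C \int_0^\infty \psi_0^2 |\phi'|^2 \, r\,\rmd r ,
$$
where $C$ is the supremum appearing in the statement (if $C = \infty$ there is nothing to prove). Setting $u(r) := V(r)\psi_0(r)^2 r$ and $v(r) := \psi_0(r)^2 r$ and using that $\phi$ vanishes at infinity, this is precisely the classical Muckenhoupt--Bradley inequality $\int u|\phi|^2 \leqslant 4A \int v|\phi'|^2$ with $A = \sup_{r>0}\bigl(\int_0^r u\bigr)\bigl(\int_r^\infty v^{-1}\bigr) = C$ and sharp constant $4A$ in the case $p=2$. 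The proof of this inequality is standard: writing $\phi(r) = -\int_r^\infty \phi'(s)\,\rmd s$, one applies the Cauchy--Schwarz inequality with respect to the weight $v$ and then Fubini's theorem to rearrange the iterated integrals so that $A$ emerges as a pointwise bound, leading to the constant $4A$ after a quadratic optimization.

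The only delicate point is the justification, in Step~1, that $\eta = \phi^2 \psi_0$ is a legitimate test function for the weak-solution equation; this is handled by observing that $\supp\phi$ is a compact subset of $(0,\infty)$, so $\eta$ inherits from $\psi_0 \in \dom_\loc t$ the requisite integrability and lies in $\dom t$ with compact support in $[0,\infty)$. Every other step is a routine expansion of products or the invocation of a classical weighted Hardy inequality, so I expect no further obstacles.
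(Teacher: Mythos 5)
Your proposal is correct and follows essentially the same route as the paper: the ground state substitution $\psi=\psi_0\phi$ yielding $t[\psi]=\int_0^\infty\psi_0^2|\phi'|^2\,r\,\rmd r$, followed by Muckenhoupt's one-weight Hardy inequality with constant $4A$. The paper states this in two lines citing Maz'ya; your additional care in justifying $\eta=\phi^2\psi_0$ as a test function is a legitimate filling-in of detail, not a departure.
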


Note that the integral $\int_r^\infty \psi_0^{-2} \rho^{-1}\,\rmd\rho$ is finite for any $r>0$ because of the assumed growth of $\psi_0$. The above supremum, however, may be finite or infinite depending on $V$. In the latter case one actually can show, arguing as below, that there is \emph{no} $c>0$ such that $t[\psi] \geqslant c \int_0^\infty V |\psi|^2 r\,\rmd r$ for all $\psi$.

\begin{proof}{}
Writing $\psi = \psi_0 \phi$ and using the equation for $\psi_0$ we find that
$$
t[\psi] = \int_0^\infty |\phi'|^2 \psi_0^2 \,r\,\rmd r \,,
\qquad
 \int_0^\infty V |\psi|^2 r\,\rmd r = \int_0^\infty V |\phi|^2 \psi_0^2 \,r\,\rmd r \,.
$$
The assertion now follows from a classical theorem by Muckenhoupt \cite[Thm. 1.3.1/3]{M}.
\end{proof}


\section{Proof of Theorems \ref{main} and \ref{mainint}}\label{sec:proofmain}

\subsection{Angular momentum decomposition}
\label{sec:symm}

In this subsection we exploit the fact that $B$ and $V$ are radially symmetric, so that $P-\alpha V$ can be decomposed according to the eigenvalues of the angular momentum operator. We introduce polar coordinates $(r,\varphi)$ in $\R^2$ and write
\begin{equation}
 \label{eq:decomp}
\psi(x) = \frac1{\sqrt{2\pi}} \sum_{m\in\Z} \psi_m(r) \rme^{im\phi} \,.
\end{equation}
This establishes a unitary equivalence between $\psi\in L_2(\R^2,\C^2)$ and sequences 
\begin{equation*}
(\psi_m)\in\sum_m\oplus L_2(\R_+,r\rmd r,\C^2).
\end{equation*}
By gauge invariance we may choose the magnetic vector potential $A$ as
\begin{equation}\label{vector potential}
A(r, \varphi):= b(r)(-\sin\varphi, \cos\varphi)\,, \quad b(r):=\frac{1}{r}\int_0^r B(\rho)\,\rho\, \rmd\rho \,.
\end{equation}
Note that $A\in L_{2,\loc}(\R^2,\R^2)$ in view of \eqref{eq:assbv}. Plugging decomposition \eqref{eq:decomp} into the quadratic form of the Pauli operator we find that
$$
\int_{\R^2} \big| \mathbf{\sigma}\cdot(-i\nabla+A)\psi\big|^2 \,\rmd x
= \sum_m \big( t_m^+[\psi_m^+] + t_m^-[\psi_m^-] \big) \,,
$$
where
$$
t_m^\pm[f] := \int_0^\infty \big(|f'|^2 + W_m^\pm |f|^2\big) \,r\,\rmd r
$$
and
$$
W_m^\pm(r) : = \big(b(r)+ mr^{-1}\big)^2 \pm B(r) \,.
$$
We denote by $T^\pm_m$ the self-adjoint operator in $L_2(\R_+,rdr)$ corresponding to the form $t_m^\pm$. We note that $C_0^\infty(0,\infty)$ is a form core for this operator. Indeed, by the arguments of \cite{S3} $C_0^\infty(\R^2\setminus\{0\})$ is a form core of $(D+A)^2$ and since $B$ is relatively form compact (see the following subsection), the same is true for $P$.

We introduce the functions
$$
\omega_m^+(r) :=
\begin{cases}
r^{m} \rme^{-\xi(r)} & \text{if}\ m\geqslant 0\,,\\
r^{m} \rme^{-\xi(r)}\int_0^r \rme^{2\xi(\rho)}\rho^{-2m-1} \,\rmd\rho & \text{if}\ m\leqslant -1\,,
\end{cases}
$$
and
$$
\omega_m^-(r) :=
\begin{cases}
r^{-m} \rme^{\xi(r)}\int_0^r \rme^{-2\xi(\rho)}\rho^{2m-1} \,\rmd\rho & \text{if}\ m\geqslant 1\,,\\
r^{-m} \rme^{\xi(r)} & \text{if}\ m\leqslant 0\,.
\end{cases}
$$
Here $\xi$ is the function from \eqref{xi}, which is radial and which by Newton's theorem \cite[Thm. 9.2]{LL} can be rewritten as
\begin{equation}\label{radial xi}
\xi(r) = -\int_0^r B(\rho)\, \rho \,\rmd\rho\,\ln r- \int_r^\infty B(\rho)\, \rho \ln \rho\,\rmd\rho, \quad r> 0\,.
\end{equation}
The functions $\omega^\pm_m$ are important since they solve
\begin{equation}\label{eq:vleq}
-r^{-1} \big(r (\omega^\pm_m)'\big)' + W_m^\pm \omega^\pm_m =0
\end{equation}
and belong locally to the form domain of $T_m^\pm$. Moreover, using that $\xi(r)=-\Phi \ln r$ for $r\geqslant R$ and that $|\xi(r)|$ is bounded for $0\leqslant r\leqslant R$, and assuming $\Phi\geqslant 0$ for the sake of definiteness one easily finds the two-sided bounds (with constants independent of $m$ and $r$)
\begin{align}
\omega_m^-(r) & \asymp
\begin{cases}\label{eq:omega-}
m^{-1} r^m (r+R)^\Phi \,, & m>0 \,, \\
r^{|m|} (r+R)^{-\Phi} \,, & m\leqslant 0 \,,
\end{cases} \\
\omega_m^+(r) & \asymp
\begin{cases}\label{eq:omega+}
r^m (r+R)^\Phi \,, & m\geqslant 0 \,, \\
|m|^{-1} r^{|m|} (r+R)^{\Phi-2|m|} \,, & -\Phi<m< 0 \,, \\
|m|^{-1} \Big(\dfrac{r}{r+R}\Big)^{|m|} \big(1 + \ln_+ (r/R) \big) \,, & m=-\Phi<0 \,, \\
|m|^{-1} r^{|m|} (r+R)^{-\Phi} \,, & m< -\Phi \,.
\end{cases}
\end{align}


\subsection{Proof of Theorems \ref{main} and \ref{mainint}}

Because of \eqref{eq:assbv} $B_\pm$ and $V$ are form compact with respect to $-\Delta$ and hence by the diamagnetic inequality also with respect to $(D+A)^2$. (A proof of this fact may be based on Proposition \ref{hardy} and the Sobolev embedding theorem.) Hence $(W_m^\pm)_-$ is form compact with respect to $-r^{-1}\partial_r r \partial_r$ and $V$ is form compact with respect to $T_m^\pm$, so Assumption \ref{ass:w} is satisfied.

First assume that $\Phi>0$. We claim that for all sufficiently small $\alpha>0$ the operators $T_m^--\alpha V$ with $-\Phi\leqslant m\leqslant 0$ have a unique negative eigenvalue $\lambda(T_m^--\alpha V)$. Indeed, this follows from Proposition \ref{vl} since for these values of $m$ the functions $\omega^-_m$ are positive, decay like $r^{-\Phi-m}$, belong locally to the form domain and satisfy \eqref{eq:vleq}. Moreover, putting
$$
v_m^\pm := \int_0^\infty V(r) \omega_m^\pm(r)^2 r\,\rmd r
$$
Theorem \ref{main1d} yields the following asymptotic behavior as $\alpha\to 0+$.
If $-\Phi+1<m\leqslant 0$, then
\begin{align*}
\lambda(T_m^--\alpha V)  = - \frac{\alpha v_m^-}{\int_0^\infty \omega_m^-{}^2 r \,\rmd r} \left(1+ 
\begin{cases} 
O\left(\alpha\right) \,,& -\Phi+2<m\leqslant 0 \\
O\big( \alpha|\ln\alpha| \big) \,,& m=-\Phi+2\leqslant 0 \\
O\left(\alpha^\mu\right) \,,& -\Phi+1< m < -\Phi +2
\end{cases}
\right) \,.
\end{align*}
If $m=-\Phi+1\leq 0$, then
\begin{equation*}
\lambda(T_{m}^--\alpha V)  = - \frac{2\alpha v^-_{m}}{|\ln\alpha|} \bigg(1 + O\Big(  \frac{\ln|\ln\alpha |}{|\ln\alpha |}\Big)\bigg) \,.
\end{equation*}
If $-\Phi<m<-\Phi+1$, then with $c_\mu$ from \eqref{c_nu}
\begin{align*}
\lambda(T_m^--\alpha V) & =
- c_\mu \left(\alpha v^-_m\right)^\frac1\mu\big(1+ O(\alpha^{\min\{1, \frac1\mu- 1\}})\big) \,.
\end{align*}
If $m=-\Phi$, then
\begin{align*}
\lambda(T_{m}^--\alpha V) & = -\exp\Big(- \frac2{\alpha v^-_{m}} \big(1+ O(\alpha) \big)\Big) \,.
\end{align*}
These asymptotics coincide with those claimed in Theorems \ref{main} and \ref{mainint} since for $k\in\N_0$ one has
$$
\Omega_{k}^\pm(x) = (2\pi)^{-1/2} \omega_{\pm k}^\pm (r) \rme^{\pm ik\phi}
$$
and $v_k = v_{\pm k}^\mp$ if $\pm\Phi>0$.

In order to complete the proof of Theorems \ref{main} and \ref{mainint} in the case $\Phi>0$ we need to show that there exists an $\alpha_c>0$ such that for all $0<\alpha\leqslant\alpha_c$ the operators $T_m^--\alpha V$ with $m<-\Phi$ and $m>0$, as well as the operators $T^+_m-\alpha V$, $m\in\Z$, are non-negative. Note that Proposition \ref{vl} shows that these operators have no negative eigenvalues for $\alpha>0$ small (since $\omega^\pm_m$ is unbounded), but it does \emph{not} imply Theorems \ref{main} and \ref{mainint} since it gives no uniformity in $m$.

Instead, we will deduce the assertion from Proposition \ref{hardy} by showing that
$$
\sup_{m\in\Z} \sup_{0<r<\infty} \int_0^r V (\omega_m^+)^2 \rho\,\rmd\rho \int_r^\infty (\omega_m^+)^{-2} \rho^{-1}\,\rmd\rho < \infty
$$
and similarly with $\omega_m^+$ replaced by $\omega_m^-$ and the supremum restricted to $m<-\Phi$ and $m>0$. According to the two-sided estimates \eqref{eq:omega-} and \eqref{eq:omega+} on $\omega_m^\pm$ this is equivalent to showing that
$$
\sup_{k\in\N} \sup_{0<r\leqslant R} k^{-1} r^{-2k} \int_0^r V \rho^{2k+1} \,\rmd\rho < \infty
\quad\text{and}\quad
\sup_{0<r\leqslant R} |\ln r| \int_0^r V \rho \,\rmd\rho <\infty \,.
$$
These estimates are easily deduced from \eqref{eq:assbv}. This completes the proof in the case $\Phi>0$.

The proof in the case $\Phi=0$ is similar, but now both $\omega^+_0$ and $\omega^-_0$ are bounded positive solutions which locally belong to the form domain. The non-negativity for $m\neq 0$ follows again by Proposition \ref{hardy}.

The result for $\Phi<0$ follows from that for $\Phi>0$ since complex conjugation is an anti-unitary operator which switches the sign of $B$. This completes the proof of Theorems \ref{main} and \ref{mainint}.


\bibliographystyle{amsalpha}

\end{document}